\newtheorem{theorem}{Theorem}
\newtheorem{lemma}[theorem]{Lemma}
\newtheorem{proposition}[theorem]{Proposition}
\newtheorem{conjecture}{Conjecture}
\newcommand{\myRef}[1]{#1}
\newcommand{\mySection}[1]{Section #1}
\newcommand{\R}{\mathbb{R}}
\newcommand{\C}{\mathbb{C}}
\newcommand{\K}{\mathbb{K}}
\newcommand{\E}{\mathbb{E}}
\newcommand{\Pee}{\mathcal{P}}
\newcommand{\otherB}{\tilde{B}}
\newcommand{\dd}[1]{\,\textit{d}#1}
\newcommand{\pr}{\mathbb{P}}
\newcommand{\perm}[1]{\text{per}(#1)}
\begin{document}

\title{A stability result using the matrix norm to bound the permanent}
\author{Ross Berkowitz\footnote{Rutgers University.  Email: \texttt{rkb73@math.rutgers.edu}} \qquad \qquad Pat Devlin\footnote{Rutgers University.  Email: \texttt{prd41@math.rutgers.edu}.  Supported by NSF grant DMS1501962.}}
\date{June 23, 2016}

\maketitle
\begin{abstract}
We prove a stability version of a general result that bounds the permanent of a matrix in terms of its operator norm.  More specifically, suppose $A$ is an $n \times n$ matrix over $\C$ (resp. $\R$), and let $\Pee$ denote the set of $n \times n$ matrices over $\C$ (resp. $\R$) that can be written as a permutation matrix times a unitary diagonal matrix.  Then it is known that the permanent of $A$ satisfies $|\perm{A}| \leq \Vert A \Vert_{2} ^n$ with equality iff $A/ \Vert A \Vert_{2} \in \Pee$ (where $\Vert A \Vert_2$ is the operator $2$-norm of $A$).  We show a stability version of this result asserting that unless $A$ is very close (in a particular sense) to one of these extremal matrices, its permanent is exponentially smaller (as a function of $n$) than $\Vert A \Vert_2 ^n$.  In particular, for any fixed $\alpha, \beta > 0$, we show that $|\perm{A}|$ is exponentially smaller than $\Vert A \Vert_2 ^n$ unless all but at most $\alpha n$ rows contain entries of modulus at least $\Vert A \Vert_2 (1 - \beta)$.
\end{abstract}

\section{Introduction}\label{section intro}
The \textit{permanent} of an $n \times n$ matrix, $A$, has long been an important quantity in combinatorics and computer science, and more recently it has also had applications to physics and linear-optical quantum computing.  It is defined as
\[
\perm{A} := \sum_{\sigma\in S_n} \prod_{i=1} ^{n} a_{i, \sigma(i)},
\]
where $S_n$ denotes the set of permutations of $[n]=\{1,2,\ldots,n\}$.  For instance, if $A$ only has entries in $\{0,1\} \subseteq \R$, then the permanent counts the number of perfect matchings in the bipartite graph whose bipartite adjacency matrix is $A$.

\paragraph*{}The definition of the permanent is of course reminiscent of that for the determinant; however, whereas the determinant is rich in algebraic and geometric meaning, the more combinatorial permanent is notoriously difficult to understand.  For example, computing $\perm{A}$ even for $\{0, 1\}$-matrices is the prototypical \#P-complete problem (Valiant \cite{valiant}).

\paragraph*{}On the other hand, the \textit{operator 2-norm} (also called the \textit{operator norm}) of a matrix is a particularly nice parameter.  For an $n \times n$ matrix $A$ with entries in $\C$, it is defined as
\[
\Vert A \Vert_{2} = \sup_{\Vert \vec{x} \Vert_{2} \leq 1, \ \vec{x} \in \C^n} \Vert A \vec{x} \Vert_{2},
\]
where $\Vert \vec{v} \Vert_{p}$ is the usual $l_{p}$ norm (i.e.,  $\Vert \vec{v} \Vert_p ^{p} = \sum_{i} |v_i|^{p}$ for $p \in (0, \infty)$, and $\Vert \vec{v} \Vert_{\infty} = \max |v_i|$).  The operator norm of a matrix has the advantages of being both algebraically and analytically well-behaved as well as computationally easy to determine (as this amounts to finding the largest singular value of $A$).

\paragraph*{}Considering how differently behaved the permanent and operator norm are, it is perhaps strange to think that there would be much of a connection between them.  Nonetheless, they are related by the following extremal result, which is due to Gurvits \cite{gurvits} (see also \cite{aaronson, nguyen}).

\begin{theorem}\label{norm result}
Suppose $A$ is an $n \times n$ matrix over $\C$ (resp. $\R$), and let $\Pee$ denote the set of $n \times n$ matrices over $\C$ (resp. $\R$) that can be written as a permutation matrix times a unitary diagonal matrix.  Then $|\perm{A}| \leq \Vert A \Vert_{2} ^n$ with equality iff $A$ is a scalar multiple of a matrix in $\Pee$.
\end{theorem}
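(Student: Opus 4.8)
The plan is to represent the permanent as $\perm{A} = \langle \phi,\, A^{\otimes n}\phi\rangle$ for a fixed unit vector $\phi$ in the symmetric subspace of $(\C^n)^{\otimes n}$, and then read off the inequality from the fact that the operator norm of $A^{\otimes n}$ equals $\|A\|_2^n$. Concretely, with $e_1,\dots,e_n$ the standard basis, put $\Phi := \sum_{\sigma\in S_n} e_{\sigma(1)}\otimes\cdots\otimes e_{\sigma(n)}$ and $\phi := \Phi/\sqrt{n!}$, so that $\|\Phi\|_2^2 = n!$ and $\|\phi\|_2 = 1$. Expanding the Hermitian inner product of $\Phi$ with $A^{\otimes n}\Phi$ factorwise and then reindexing the two permutations gives
\[
\langle \Phi,\, A^{\otimes n}\Phi\rangle \;=\; \sum_{\sigma,\tau\in S_n}\ \prod_{i=1}^n a_{\sigma(i),\,\tau(i)} \;=\; n!\,\perm{A},
\]
hence $\perm{A} = \langle \phi,\, A^{\otimes n}\phi\rangle$ (no conjugates intervene, since the left slot always holds a $0$–$1$ vector). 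Since the singular values of $A^{\otimes n}$ are exactly the products $\sigma_{i_1}\cdots\sigma_{i_n}$ of singular values of $A$, we have $\|A^{\otimes n}\|_2 = \|A\|_2^n$, and Cauchy–Schwarz gives
\[
|\perm{A}| \;=\; \bigl|\langle \phi,\, A^{\otimes n}\phi\rangle\bigr| \;\le\; \|\phi\|_2\,\|A^{\otimes n}\phi\|_2 \;\le\; \|A^{\otimes n}\|_2\,\|\phi\|_2^2 \;=\; \|A\|_2^n .
\]

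For the equality case, assume $A\neq 0$ and $|\perm{A}| = \|A\|_2^n$, so both inequalities above are tight. Tightness of the second forces $\|A^{\otimes n}\phi\|_2 = \|A^{\otimes n}\|_2\|\phi\|_2$, i.e. $\phi$ lies in the span of the right singular vectors of $A^{\otimes n}$ of maximal singular value; writing a singular value decomposition of $A$, that span is $V^{\otimes n}$, where $V\subseteq\C^n$ is spanned by the right singular vectors of $A$ attaining $\|A\|_2$. Being a symmetric tensor inside $V^{\otimes n}$, $\phi$ lies in $\mathrm{Sym}^n(V)$. The crux is then the lemma: \emph{$\phi\in\mathrm{Sym}^n(W)$ for a subspace $W\subseteq\C^n$ forces $W=\C^n$.} I would prove this by identifying $\mathrm{Sym}^n(\C^n)$ with the degree-$n$ homogeneous polynomials in $x_1,\dots,x_n$, under which $\phi$ corresponds (up to a nonzero scalar) to the monomial $x_1x_2\cdots x_n$ and $\mathrm{Sym}^n(W)$ to the polynomials lying in the subring generated by any $\dim W$ independent linear forms; a short argument with partial derivatives then shows $x_1\cdots x_n$ is not of that form when $\dim W < n$. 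Granting the lemma, $V = \C^n$, so every singular value of $A$ equals $\|A\|_2$ and $A = cQ$ with $c := \|A\|_2$ and $Q$ unitary.

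It remains to pin down $Q$. Tightness of the \emph{first} inequality above is equality in Cauchy–Schwarz, so $A^{\otimes n}\phi = \lambda\phi$ for some scalar $\lambda$, whence $Q^{\otimes n}\phi = \mu\phi$ with $|\mu| = 1$. In the polynomial model, $Q^{\otimes n}$ carries $x_1\cdots x_n$ to the product $\ell_1\cdots\ell_n$ of the columns of $Q$ read as linear forms, so $\ell_1\cdots\ell_n = \mu\, x_1\cdots x_n$; unique factorization in $\C[x_1,\dots,x_n]$ forces each column of $Q$ to be a scalar multiple of a standard basis vector, with distinct columns hitting distinct basis vectors, and unitarity makes all these scalars have modulus $1$ — that is, $Q\in\Pee$. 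Conversely, if $A = cP$ with $P\in\Pee$, then $\perm{P}$ is a product of $n$ scalars of modulus $1$, so $|\perm{A}| = |c|^n = \|A\|_2^n$; this closes the characterization. (The real case is identical, with ``unitary'' read as ``orthogonal''.)

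The main obstacle I anticipate is the equality analysis, and within it the lemma that the symmetrized standard-basis tensor $\phi$ lies in no $\mathrm{Sym}^n(W)$ with $W$ a proper subspace of $\C^n$. By contrast, the inequality itself, the tensor-power identity, and the facts about singular values under tensor products are all routine.
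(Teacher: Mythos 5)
Your proof is correct, and it takes a genuinely different route from the one the paper sketches. The paper reaches $|\perm{A}| \le \|A\|_2^n$ probabilistically, by writing $\perm{A} = \E\bigl[\prod_i \overline{X}_i Y_i\bigr]$ with $Y = AX$ (the Glynn estimator), then applying convexity and the arithmetic--geometric mean inequality to get $|\perm{A}| \le \E[(\|AX\|_1/n)^n]$, and finally Cauchy--Schwarz on each sample to bound $\|AX\|_1/n \le \|A\|_2$; equality is traced back through those three estimates. This set-up is deliberate: the rest of the paper needs $\E[(\|AX\|_1/n)^n]$ and the random variable $\|AX\|_1$ for its concentration arguments. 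You instead use the purely algebraic identity $\perm{A} = \langle \phi, A^{\otimes n}\phi\rangle$ with $\phi$ the normalized symmetrizer of $e_1\otimes\cdots\otimes e_n$, read the inequality off from $\|A^{\otimes n}\|_2 = \|A\|_2^n$ and a single Cauchy--Schwarz, and handle the equality case via symmetric powers, the polynomial model, and unique factorization. Your equality analysis is in fact tighter and more self-contained than the paper's one-line remark: the key lemma that $x_1\cdots x_n \notin \mathrm{Sym}^n(W)$ for $W \subsetneq \C^n$ is correct (the directional-derivative argument you gesture at works: any $v$ annihilating all linear forms from $W$ kills every element of $\mathrm{Sym}^n(W)$, yet $D_v(x_1\cdots x_n) = \sum_i v_i \prod_{j\ne i} x_j \ne 0$ for $v\ne 0$), and the unique-factorization step cleanly forces $Q\in\Pee$. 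The trade-off is that your approach is cleaner for the extremal statement in isolation, while the paper's set-up is what gets reused to prove the stability theorems that are the point of the paper.
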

\paragraph*{}Note that this extremal set $\Pee$ is simply the set of matrices with exactly $n$ non-zero entries, each having modulus 1, and no two of which are in the same row or column.  Such a matrix $P \in \Pee$ has $\Vert P \Vert_2 = | \perm{P}| = 1$ and satisfies
\[
\Vert AP \Vert_2 = \Vert PA \Vert_2 = \Vert A \Vert_2, \qquad \qquad \text{and} \qquad \qquad |\perm{AP}| = |\perm{PA}| = | \perm{A}|
\]
for all matrices $A$ (which is equivalent to membership in $\Pee$).  Moreover, $\Pee$ is a subgroup of the group of unitary matrices, and as a set, it has a very tractable topological structure.

\paragraph*{}Motivated by algorithmic questions related to approximating the permanent, Aaronson and Hance \cite{aaronson} asked whether one could prove a stability version of \myRef{Theorem \ref{norm result}}:
\paragraph*{Question A:} If $|\perm{A}|$ is close to $\Vert A \Vert_2 ^n$, must $A / \Vert A \Vert_2$ be `close' to a matrix in $\Pee$?

\paragraph*{}A somewhat more concrete version was suggested by Aaronson and Nguyen \cite{nguyen}:
\paragraph*{Question B:} Characterize $n \times n$ matrices $A$ such that $\Vert A \Vert_2 \leq 1$ and there exists a constant $C > 0$ such that $|\perm{A}| \geq n^{-C}$.

\paragraph*{}Using techniques of inverse Littlewood-Offord theory, Aaronson and Nguyen gave a substantial answer to an analogous question under the (stronger) assumptions that $A$ is orthogonal and that the intersection of the hypercube $\{\pm 1\}^n$ with its image under $A$ is large.  They also proved something like (actually slightly stronger than) our results below for stochastic matrices.  Further results in the direction of \myRef{Question B} were given by Nguyen \cite{nguyenPrivate}.

\paragraph*{}The two main results of the present paper are \myRef{Theorems \ref{main theorem}} and \myRef{\ref{main theorem real case}} below.  The first provides a positive answer to \myRef{Question A} for matrices over $\C$ (or $\R$), and the second is a more refined result that (depending on your philosophical views) at least partially addresses \myRef{Question B} for matrices over $\R$.  More specifically, we bound $\perm{A}$ in terms of the following easily computed parameters.

\paragraph*{Definition:} Let $A$ be a matrix with rows $r_1, r_2, \ldots , r_n$, and $p \in \R \cup \{\infty\}$.  Then the parameter $h_p (A)$ is defined as $h_{p} (A) = h_p = \frac{1}{n} \sum_{i} \Vert r_i \Vert_{p}$.

\paragraph*{}We will only consider $h_\infty$ and $h_2$.  First note $0 \leq h_{\infty} (A) \leq h_{2} (A) \leq \Vert A \Vert_2$.  Moreover, it is easy to show $h_{2} (A) = \Vert A \Vert_2$ iff $A/\Vert A \Vert_2$ is a unitary matrix, and $h_{\infty} (A) = \Vert A \Vert_2$ iff $A / \Vert A \Vert_2$ is in $\Pee$.  Thus, in some sense, the quantity $1- h_{2}(A) / \Vert A \Vert_2 \in [0, 1]$ measures how close $A / \Vert A \Vert_2$ is to being unitary, and $1 - h_{\infty}(A) / \Vert A \Vert_2 \in [0, 1]$ measures how close $A / \Vert A \Vert_2$ is to being in $\Pee$.  Broadly speaking, $h_{\infty} / \Vert A \Vert_2$ is close to 1 precisely when most of the rows of $A$ each have one entry of modulus close to $\Vert A \Vert_2$ and all the other entries in that row are close to 0.

\paragraph*{}Before stating the first of our main results, notice that in addressing either of the above questions, we lose no generality in assuming $\Vert A \Vert _2 \leq 1$, since \myRef{Question A} is invariant under scaling.  However, to facilitate any application of our results, we state them in the ``more general" case that $\Vert A \Vert_2 \leq T$.

\newpage
\begin{theorem}\label{main theorem}
Let $A$ be an $n\times n$ matrix over $\C$ and $\Vert A \Vert_{2} \leq T \neq 0$.  Then
\begin{itemize}
\item[(i)] $|\perm{A}| \leq 2T^{n} \exp \bigg[-3n\Big(1- \frac{\sqrt{\pi}}{2} h_{2}/T - \left(1- \frac{\sqrt{\pi}}{2} \right)h_{\infty} /T \Big)^2/100 \bigg],$
\item[(ii)] $|\perm{A}| \leq 2 T^{n} \exp[-n(1-h_{\infty} / T)^2 /10^{5}]$.
\end{itemize}
\end{theorem}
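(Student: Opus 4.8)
The plan is to linearize the permanent into a multiplicative average over a random diagonal unitary and then read off the stability gap one row at a time.

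\emph{Reduction.} Let $r_1,\dots,r_n$ be the rows of $A$ and let $\vec z=(z_1,\dots,z_n)$ have i.i.d.\ entries, each uniform on the unit circle of $\C$ (in the real case one takes the $z_j$ i.i.d.\ uniform on $\{\pm1\}$). Since $\E[z^a\bar z^b]=\delta_{ab}$, expanding $\prod_i\big(\sum_j a_{ij}z_j\big)$ and multiplying by $\prod_j\bar z_j$ annihilates every non-bijective term, so $\perm A=\E\big[\prod_i(\sum_j a_{ij}z_j)\cdot\prod_j\bar z_j\big]$, and hence
\[
|\perm A|\ \le\ \E\Big[\,\prod_{i=1}^n Y_i\,\Big],\qquad Y_i:=\Big|\sum_{j} a_{ij}z_j\Big|.
\]
As $|z_j|=1$ we have the pointwise bound $\sum_i Y_i^2=\|A\vec z\|_2^2\le n\|A\|_2^2\le nT^2$, and AM--GM already yields $\prod_i Y_i\le(\tfrac1n\sum_i Y_i^2)^{n/2}\le T^n$, which is Theorem~\ref{norm result}; everything rests on improving this by an exponential factor once $A/T$ is bounded away from $\Pee$.

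\emph{The one-row deficit.} Fix $i$, let $j_i$ attain $\|r_i\|_\infty$, and write $\sum_j a_{ij}z_j=a_{i,j_i}z_{j_i}+b_i$ with $b_i:=\sum_{j\neq j_i}a_{ij}z_j$. Conditioning on $(z_j)_{j\neq j_i}$ and integrating out the independent phase $z_{j_i}$, an elementary estimate of $\E\,|c+b_i|$ for $c$ uniform on the circle of radius $\|r_i\|_\infty$ --- interpolating between the value $\|r_i\|_\infty$ (deterministic, realized when the row is a pure spike, i.e.\ the ``$n=1$'' case) and the complex-Gaussian value $\tfrac{\sqrt\pi}{2}\|r_i\|_2$ (realized when the row is spread out) --- gives
\[
\E[Y_i]\ \le\ \tfrac{\sqrt\pi}{2}\,\|r_i\|_2+\Big(1-\tfrac{\sqrt\pi}{2}\Big)\|r_i\|_\infty\ =:\ T(1-\delta_i),
\]
so that averaging over $i$ turns $\tfrac1n\sum_i\delta_i$ into exactly $\delta:=1-\tfrac{\sqrt\pi}{2}h_2/T-(1-\tfrac{\sqrt\pi}{2})h_\infty/T\in[0,1]$, which by the remarks preceding the theorem vanishes iff $A/T\in\Pee$.

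\emph{Assembling the bound, and the main obstacle.} Were the $Y_i$ independent, multiplying the last display would give $|\perm A|\le\prod_i\E[Y_i]\le T^n e^{-n\delta}$, the ``honest'' rate. They are not, and --- worse --- each $Y_i/T$ has fluctuations of order $1$, so no row-by-row argument and no crude concentration of $\sum_i Y_i$ can work. The plan is to retain the multiplicative form $\prod_i Y_i=T^n\exp\!\big(\tfrac12\sum_i\log(Y_i^2/T^2)\big)$ and play the pointwise constraint $\sum_i Y_i^2\le nT^2$ against the strict concavity of $\log$: using a bound of the shape $\log(1+s)\le s-c\,s^2$ on a controlled range (the atypical configurations in which some $Y_i$ is much larger than $\|r_i\|_2$ being discarded via the exponential-moment control of the quadratic form $\|A\vec z\|_2^2$), one shows the resulting quadratic penalty has expectation $\gtrsim n\delta^2$ and concentrates, so that $\E[\prod_i Y_i]\le 2T^n e^{-cn\delta^2}$. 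Making the first-moment deficit of the previous step cooperate with this --- essentially the only robust global control available --- is the technical heart, and the mismatch between the two is exactly what degrades the exponent from $\delta$ to $\delta^2$ and forces the crude denominators $100$ and $10^5$, the split-off low-probability event accounting for the harmless factor $2$. This establishes (i). Part (ii) is then immediate: since $h_2(A)\le\|A\|_2\le T$ we have $\delta\ge(1-\tfrac{\sqrt\pi}{2})(1-h_\infty/T)$, and $3\big(1-\tfrac{\sqrt\pi}{2}\big)^2/100>10^{-5}$, so the exponent in (i) is already no larger than the one claimed in (ii).
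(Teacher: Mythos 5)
Your reduction via the Glynn estimator and your first-moment bound $\E[Y_i]\le \frac{\sqrt\pi}{2}\Vert r_i\Vert_2+(1-\frac{\sqrt\pi}{2})\Vert r_i\Vert_\infty$ match the paper's set-up; the latter is exactly the K\"onig--Sch\"utt--Tomczak-Jaegermann refinement of Khintchine's inequality (Theorem \ref{konig_ineq} in the paper), which is a cited theorem rather than the ``elementary'' computation your conditioning sketch suggests --- integrating out the phase $z_{j_i}$ and applying Jensen only recovers $\E[Y_i]\le\Vert r_i\Vert_2$, so you would still owe a proof of the interpolated bound. Your derivation of (ii) from (i) via $h_2\le T$ and $3(1-\frac{\sqrt\pi}{2})^2/100>10^{-5}$ is correct and is what the paper does.

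The genuine gap is the step you yourself label ``the technical heart.'' The paper's proof of (i) runs: $|\perm{A}|\le\E[(\Vert AX\Vert_1/n)^n]\le L^n+\pr(\Vert AX\Vert_1/n>L)$ for a threshold $L$ between the mean and $1$, combined with an actual concentration inequality --- Talagrand's Gaussian concentration applied to $x\mapsto\Vert Ag(x)\Vert_1$, which is $\sqrt{2\pi n}$-Lipschitz because $\Vert A\Vert_2\le1$, yielding $\pr(\Vert AX\Vert_1>\E[\Vert AX\Vert_1]+tn)\le e^{-nt^2/\pi^3}$; optimizing $L$ gives the constant $3/100$. Your alternative --- writing $\prod_iY_i=T^n\exp\big(\tfrac12\sum_i\log(Y_i^2/T^2)\big)$ and playing $\log(1+s)\le s-cs^2$ against the pointwise constraint $\sum_iY_i^2\le nT^2$ --- reduces (i) to the claim that $\sum_i(Y_i^2/T^2-1)^2\gtrsim n\delta^2$ with probability $1-e^{-cn\delta^2}$. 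That claim is asserted, not proved, and it does not follow from anything preceding it: the expectation of your penalty is governed by fourth moments of the rows (the second moments $\E[Y_i^2]=\Vert r_i\Vert_2^2$ carry no information about $h_\infty$ whatsoever), and a high-probability \emph{lower} bound on a degree-four polynomial of the $z_j$'s is itself a concentration statement requiring precisely the kind of tool (Talagrand, or hypercontractivity as in the paper's real-case argument) that your write-up omits. Until that lower-tail estimate is supplied, the proof of (i) is incomplete.
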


\paragraph*{}As discussed above, this provides a positive answer to \myRef{Question A} by viewing $h_\infty$ (and to a lesser extent $h_2$) as a proxy for `closeness' of a matrix $A$ to those in $\Pee$.  As an easy corollary, if $\alpha, \beta \geq 0$ satisfy $|\perm{A}| \geq 2 T^{n} \exp[-n \alpha^2 \beta^2 / 10^5]$, then all but at most $\alpha n$ of the rows of $A$ contain an entry whose modulus is at least $T(1-\beta)$.  And since the $l_2$ norm of any row of $A$ is at most $\Vert A \Vert_2$, no entry of $A$ can have modulus larger than $T$.  Thus, entries of modulus $T(1-\beta)$ are nearly as large as possible.  Moreover, if a row (or column) has an entry with very large modulus, then the remaining entries must have very small moduli (again since its $l_2$ norm is at most $\Vert A \Vert_2$).  Thus, this theorem also provides a \textit{qualitative} stability result stating that matrices with large permanent must have many very large entries, and a row (or column) containing a large entry must have all its other entries small.

\paragraph*{}Note that \myRef{Theorem \ref{main theorem}} is only useful for values of $h_{\infty} / T$ that are not very close to $1$---namely when $1 - h_{\infty} / T \gg n^{-1/2}$.  Although this does well in many cases, we believe that for large values of $h_\infty / T$, it is not optimal.  For comparison, if $A$ is $\delta$ times the identity matrix, and $\delta \approx 1$, then $|\perm{A}| \approx e^{-n (1-\delta)} = e^{-n (1-h_{\infty}) }$, and we conjecture that this is essentially tight.

\begin{conjecture}\label{main conjecture}
There is some constant $C > 0$ and some polynomial $f(n)$ such that the following holds.  If $A$ is an $n \times n$ matrix with complex entries and $\Vert A \Vert_2 \leq 1$, then $| \perm{A}| \leq f(n) e^{-Cn(1 - h_\infty)}$.
\end{conjecture}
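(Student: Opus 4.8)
As this is a conjecture, I outline a plan of attack rather than a complete proof. The plan is to combine the rigidity forced by $\Vert A\Vert_2\le 1$ with a representation of the permanent that does not destroy the cancellation on which the bound depends.

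\emph{Reductions.} Scaling, we may take $\Vert A\Vert_2=1$. By \myRef{Theorem \ref{main theorem}} there is an absolute constant $\epsilon_0>0$ for which the conjecture already holds---with a bound of the form $e^{-cn}$, which dominates $e^{-Cn(1-h_\infty)}$ once $C$ is chosen small---whenever $h_\infty\le 1-\epsilon_0$, so we may assume $h_\infty>1-\epsilon_0$, i.e.\ $\sum_i\delta_i=n(1-h_\infty)<\epsilon_0 n$ with $\delta_i:=1-\Vert r_i\Vert_\infty$. Right-multiplying by an element of $\Pee$ (a column permutation, changing neither $|\perm{A}|$, $\Vert A\Vert_2$, nor $h_\infty$) together with a Hall-type matching argument---two rows whose dominant entry lay in the same column $j$ would force $\Vert A e_j\Vert_2^2>1$ once their $\Vert r_i\Vert_\infty$ exceed $1/\sqrt2$, contradicting $\Vert A\Vert_2\le1$; the remaining ``bad'' rows number only $O(n(1-h_\infty))$ and can be assigned leftover columns and treated crudely at the end---lets us write $A=D+E$ with $D=\mathrm{diag}(a_{11},\dots,a_{nn})$ (so $|a_{ii}|=1-\delta_i$ on the many good rows) and $E$ vanishing on the diagonal. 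The facts to keep are $\Vert E_{i,\cdot}\Vert_2^2,\Vert E_{\cdot,i}\Vert_2^2\le 2\delta_i$ (hence $\Vert E\Vert_F^2\le 2n(1-h_\infty)$) and, crucially, the full operator inequality $(D+E)(D+E)^{*}\preceq I$.

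\emph{Main step.} I would use the Glynn / Godsil--Gutman identity, valid over $\C$,
\[
\perm{A}\;=\;\mathbb{E}_{\vec\epsilon}\Big[\Big(\prod_{i}\epsilon_i\Big)\prod_{j}(A^{T}\vec\epsilon)_j\Big],\qquad \vec\epsilon\ \text{uniform on}\ \{\pm1\}^n,
\]
followed by Cauchy--Schwarz: $|\perm{A}|^2\le \mathbb{E}_{\vec\epsilon}\big[\prod_j|(A^{T}\vec\epsilon)_j|^2\big]$. The point of \emph{this} estimate, in contrast to the triangle inequality $|\perm{A}|\le\perm{|A|}$, is that it retains the cancellation the conjecture needs: when $A$ is a block-diagonal product of $2\times2$ rotations by $\theta$ (a unitary matrix far from $\Pee$), one computes $\prod_j|(A^{T}\vec\epsilon)_j|^2=(\cos2\theta)^n$ \emph{identically} in $\vec\epsilon$, so the bound is exactly tight there, whereas $\perm{|A|}=1$. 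Writing $(A^{T}\vec\epsilon)_j=a_{jj}\epsilon_j+(E^{T}\vec\epsilon)_j$ and expanding,
\[
\prod_j|(A^{T}\vec\epsilon)_j|^2=\Big(\prod_j|a_{jj}|^2\Big)\prod_j(1+Y_j),
\]
where $Y_j$ is a quadratic form in $\vec\epsilon$ built only from the $j$-th column of $E$, with $\mathbb{E}[Y_j]=\Vert E_{\cdot,j}\Vert_2^2/|a_{jj}|^2$. Since $\prod_j|a_{jj}|^2\le e^{-2n(1-h_\infty)}$, it would suffice to show $\mathbb{E}_{\vec\epsilon}[\prod_j(1+Y_j)]\le f(n)$; and $\Vert A\Vert_2\le1$ says exactly that $\sum_j|(A^{T}\vec\epsilon)_j|^2=\Vert A^{T}\vec\epsilon\Vert_2^2\le n$ for every $\vec\epsilon$, so by AM--GM $\prod_j(1+Y_j)\le\prod_j|a_{jj}|^{-2}$ pointwise---the task is to turn this pointwise bound into a bound on the expectation.

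\emph{The main obstacle.} The difficulty is that the $Y_j$ cannot be treated as independent: that would give $\mathbb{E}[\prod_j(1+Y_j)]\approx\prod_j\big(1+\Vert E_{\cdot,j}\Vert_2^2/|a_{jj}|^2\big)$, which is $e^{\Theta(n(1-h_\infty))}$ and swamps the gain. The correlations among the $Y_j$ must be shown to \emph{help}, as they do dramatically in the rotation example, where the anticorrelation of paired factors collapses the product to a constant; equivalently, in the expansion $\perm{A}=\sum_{S}\big(\prod_{i\in S}a_{ii}\big)\perm{E[\overline S,\overline S]}$ one must exploit the \emph{signed} cancellation among the sub-permanents, using that $\Vert A\Vert_2\le1$ controls all rows and columns of $E$ simultaneously---no argument that discards signs (bounds $\perm{E[\overline S,\overline S]}$ by the permanent of its entrywise absolute value) can work, since $\perm{|A|}$ can equal $1$ while $\perm{A}$ is exponentially small. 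The route I would attempt is an induction on $n$, deleting one coordinate at a time, charging each deleted row/column against the $\ell_2$-mass it carries in $E$, and showing the multiplicative error per step is at most $e^{(1-C)\delta_i+o(\delta_i)}$, with the operator inequality $(D+E)(D+E)^{*}\preceq I$ furnishing the trade-off between $\delta_i$ and that mass; keeping the accumulated error polynomial in $n$ is the heart of the matter and is presumably why the statement remains only a conjecture. The doubly stochastic matrices $A=\tfrac1k\sum_{i\le k}P_i$ with disjoint supports, for which $\perm{A}$ and $1-h_\infty$ are exactly computable and force $C$ strictly below $1$, are the natural test cases against which to calibrate such an argument.
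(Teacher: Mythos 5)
The statement you are addressing is an open conjecture; the paper offers no proof, only a discussion in \mySection{\ref{section conclusion}} of why their own machinery (Jensen on the Glynn estimator plus Talagrand-type concentration of $\Vert AX\Vert_1$) seems to stall at the wrong exponent, and of possible ways around it (splitting $A$ into more pieces, avoiding Talagrand, characterizing extremizers). You correctly treat the statement as a conjecture and offer a plan, and your plan takes a genuinely different route from what the paper floats. Where the paper passes through $\E[(\Vert AX\Vert_1/n)^n]$ and then worries that ``the convexity bounds on the Glynn estimator already give away too much,'' you instead apply Cauchy--Schwarz to the Glynn identity over $\{\pm 1\}^n$ directly, obtaining $|\perm{A}|^2 \le \E\big[\prod_j |(A^T\vec\epsilon)_j|^2\big]$. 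The computations you give checking this are all correct: for block rotations by $\theta$ the right side is $(\cos^2 2\theta)^{n/2}$ identically, exactly matching $|\perm{A}|^2$, so unlike $|\perm{A}|\le\perm{|A|}$ this estimate preserves the cancellation that any resolution of the conjecture must exploit. The reductions (scaling, Theorem~\ref{main theorem} to dispose of $h_\infty\le 1-\epsilon_0$, the Hall-type argument placing dominant entries on the diagonal, $\Vert E\Vert_F^2\le 2n(1-h_\infty)$, the factorization $|(A^T\vec\epsilon)_j|^2=|a_{jj}|^2(1+Y_j)$ with $\E[Y_j]=\Vert E_{\cdot,j}\Vert_2^2/|a_{jj}|^2$, and $\prod_j|a_{jj}|^2\le e^{-2n(1-h_\infty)}$) are all sound, and your calibration examples (the $2\times 2$ rotations, and $\tfrac1k\sum P_i$ forcing $C<1$, e.g.\ $A=J/n$ gives $\perm{A}\sim e^{-n}$ against $1-h_\infty=1-1/n$) are well chosen.

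The gap you flag is genuine and is the whole ballgame. Treating the $Y_j$ as independent gives $\E[\prod(1+Y_j)]\approx e^{2n(1-h_\infty)}$, which exactly cancels the gain from $\prod|a_{jj}|^2$; the pointwise AM--GM bound $\prod(1+Y_j)\le\prod|a_{jj}|^{-2}$ is equally useless for the same reason; so the approach lives or dies on showing that the \emph{correlations} among the $Y_j$ forced by $(D+E)(D+E)^{*}\preceq I$ make $\E[\prod(1+Y_j)]$ polynomial. The rotation example shows this anticorrelation can be total (the product is literally constant), but that relies on a very special pairing structure, and the inductive ``charge each coordinate against its $\ell_2$-mass'' scheme you sketch has no concrete contraction estimate behind it---nothing in the outline bounds the per-step multiplicative error by $e^{(1-C)\delta_i+o(\delta_i)}$ or even shows the error is subexponential in $n(1-h_\infty)$. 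Two further cautions worth keeping in mind if you pursue this: first, the Cauchy--Schwarz step commits you to the strictly stronger $L^2$ statement $\E[\prod_j|(A^T\vec\epsilon)_j|^2]\le f(n)^2 e^{-2Cn(1-h_\infty)}$, which might fail even if the conjecture is true (you should stress-test it on the $J/n$ example, where the CS bound is lossy by a constant factor in the exponent, and on matrices with $E$ concentrated in a few large off-diagonal entries); second, the ``leftover bad rows'' with $\Vert r_i\Vert_\infty\le 1/\sqrt2$ need an explicit treatment, since assigning them arbitrary columns can spoil the bound $\Vert E_{\cdot,j}\Vert_2^2\le 2\delta_j$ on those columns. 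None of this is fatal to the outline as an outline, but it is why, as you say, this remains a conjecture.
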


\paragraph*{}As a step in this direction, we are able to prove the following, which better addresses \myRef{Question B} for matrices over $\R$.

\begin{theorem}\label{main theorem real case}
Let $A$ be an $n\times n$ matrix over $\R$ and $\Vert A \Vert_{2} \leq T \neq 0$.  Then
\[
|\perm{A}| \leq T^{n} (n+6) \exp \left[ \dfrac{-\sqrt{n(1-h_{\infty} / T)}}{400} \right].
\]
\end{theorem}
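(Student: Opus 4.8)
The plan is to prove Theorem \ref{main theorem real case} by reducing the global statement about $\perm A$ to a local statement about random $\pm 1$ combinations of the rows, and then applying a small-ball / anti-concentration estimate that is quantitatively stronger over $\R$ than the Gaussian-type bound underlying Theorem \ref{main theorem}. By homogeneity we may assume $T = 1$, so $\Vert A\Vert_2 \le 1$ and we must show $|\perm A| \le (n+6)\exp[-\sqrt{n(1-h_\infty)}/400]$. The first step is the standard identity expressing the permanent as an expectation: if $\varepsilon_1,\dots,\varepsilon_n$ are i.i.d. Rademacher signs and $X_j = \sum_i \varepsilon_i a_{ij}$ denotes the $j$-th coordinate of $\varepsilon^{\mathsf T}A$, then
\[
\perm A = \E\Big[\prod_{i=1}^n \varepsilon_i \prod_{j=1}^n X_j\Big],
\]
so that $|\perm A| \le \E\big[\prod_j |X_j|\big]$. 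Writing $Y = \varepsilon^{\mathsf T} A$, we have $\Vert Y\Vert_2 \le \Vert A\Vert_2 \le 1$ pointwise, hence $\prod_j |X_j| \le \prod_j(1 \wedge |X_j|)$, and after taking logs the task becomes to show that $\E\big[\sum_j \log(1/|X_j|)^+\big]$ is large — roughly of order $\sqrt{n(1-h_\infty)}$ — whenever $h_\infty$ is bounded away from $1$.

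The engine is an anti-concentration bound for a single coordinate $X_j = \sum_i \varepsilon_i a_{ij}$. Let $c_j = \Vert (a_{1j},\dots,a_{nj})\Vert_2$ be the $l_2$-norm of column $j$; then $\E X_j^2 = c_j^2 \le 1$. The key point is that over $\R$ a Rademacher sum with second moment $c_j^2$ cannot be too concentrated near any point unless one coefficient dominates: there is a universal constant so that $\pr(|X_j| \le t)$ is controlled in terms of $t$ and the size of the largest entry $m_j = \max_i |a_{ij}|$. Concretely I would establish, via a Paley–Zygmund or Berry–Esseen type argument (or the classical Erdős–Littlewood–Offord estimate in the degenerate case), that
\[
\E\big[\log(1/|X_j|)\big]^+ \;\gtrsim\; \text{(something like) } (1 - m_j)
\]
up to the universal constant — i.e. a column whose maximal entry is bounded away from $1$ forces a definite amount of spreading, hence a definite contribution to $\sum_j \log(1/|X_j|)$. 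Summing over $j$ and relating $\sum_j(1-m_j)$ to $n(1-h_\infty)$ (using that $h_\infty$ is the row-wise $l_\infty$ average, so after transposing or via a majorization argument the column maxima carry comparable information) would yield the bound with an $n(1-h_\infty)$ in the exponent rather than $\sqrt{n(1-h_\infty)}$ — which is \emph{better} than what is claimed, so the square root must be the price paid for an issue I discuss next.

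The main obstacle, and the reason for the $\sqrt{\ \cdot\ }$, is that the factors $|X_j|$ are \emph{not independent} (they share the signs $\varepsilon$), and worse, the naive bound $|\perm A| \le \E\prod_j |X_j|$ is lossy: when $h_\infty$ is very close to $1$ the matrix is nearly in $\Pee$, $\prod_j |X_j|$ is typically close to $1$, and one must extract the small deficit carefully rather than bounding each factor by its typical value. I expect the right move is a conditioning / martingale argument: reveal the signs one row at a time and track how $\prod_j |X_j|$ evolves, or alternatively split the columns into a "good" block (large max entry, behaves like a permutation column) and a "bad" block (of size $\sim n(1-h_\infty)$, genuinely spread out) and bound the contribution of the bad block while showing the good block contributes at most $1$. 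Getting an \emph{expectation} bound rather than a high-probability bound is delicate because rare events where many $|X_j|$ are simultaneously tiny could a priori dominate; handling this likely costs a polynomial factor (accounting for the $n+6$) and, through a concavity/Jensen step when passing from $\E\log$ to $\log\E$ or when one only controls a \emph{fraction} of the coordinates at a time, turns the linear $n(1-h_\infty)$ into $\sqrt{n(1-h_\infty)}$. I would structure the final argument as: (1) the permanent-as-expectation identity and reduction to $\E\prod_j(1\wedge|X_j|)$; (2) a clean one-column anti-concentration lemma over $\R$; (3) a combinatorial step identifying $\gtrsim n(1-h_\infty)$ "spread" columns; (4) a conditioning argument combining these into the stated exponential bound, absorbing all constants and the polynomial prefactor at the end.
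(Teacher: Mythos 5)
Your opening reduction has a concrete error: you claim $\Vert Y\Vert_2 \le \Vert A\Vert_2 \le 1$ pointwise, but $Y = \varepsilon^{\mathsf T}A$ with $\varepsilon \in \{-1,1\}^n$, so the correct bound is $\Vert Y\Vert_2 \le \Vert A\Vert_2 \Vert\varepsilon\Vert_2 \le \sqrt{n}$. Consequently individual coordinates $|X_j|$ can exceed $1$ (already for a $2\times 2$ Hadamard-type example), and the inequality $\prod_j |X_j| \le \prod_j (1\wedge|X_j|)$ is false in general. More fundamentally, even granting a clean one-column anti-concentration lemma of the form $\E\bigl[\log(1/|X_j|)\bigr] \gtrsim 1-m_j$, this cannot by itself upper-bound $\E\bigl[\prod_j|X_j|\bigr]$: by convexity of $e^{-x}$, Jensen yields $\E\bigl[\prod_j|X_j|\bigr]\ge \exp\bigl(-\E\sum_j\log(1/|X_j|)\bigr)$, which is the \emph{wrong} direction. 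You note that ``getting an expectation bound rather than a high-probability bound is delicate'' and gesture at a martingale/conditioning step to repair it, but that step is where the entire difficulty lives and it is not carried out; the reduction from row-wise $h_\infty$ to your column maxima $m_j$ is also left as a ``transposition/majorization'' remark without an argument.

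The paper avoids the Jensen obstruction at the outset by applying AM--GM to convert the product into a power of a sum, $\prod_i|Y_i|\le(\Vert AX\Vert_1/n)^n$, so the problem becomes upper-tail \emph{concentration} of the scalar $\Vert AX\Vert_1$ rather than anti-concentration of individual coordinates. The real work, and the source of the $\sqrt{n(1-h_\infty)}$, is then a row-splitting argument: rows are partitioned into ``big'' ones (containing an entry $\ge 1-\lambda$) and ``small'' ones, of which there are at most $nt/\lambda$ where $t=1-h_\infty$. Concentration of the small-row contribution $\Vert LX\Vert_1$ is handled by a discrete Talagrand inequality (with Lipschitz constant $\sqrt{l}$, which is small precisely because there are few small rows), while the big-row contribution $\Vert BX\Vert_1$ is shown with high probability to equal the degree-two polynomial $\langle X,\tilde B X\rangle$, whose variance is $O(nt)$ and whose tails are controlled via Bonami's hypercontractive inequality; optimizing $\lambda\sim 1/\sqrt{nt}$ and $\varepsilon\sim t$ produces the stated bound. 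In short, your proposal identifies the right starting identity and some of the tension points, but it pursues anti-concentration of a dependent product — an approach that, as written, both contains a false inequality and runs into a Jensen-direction wall that it does not resolve — whereas the paper's proof is a concentration argument built on the AM--GM bound and a big/small row decomposition.
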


\paragraph*{}As with \myRef{Theorem \ref{main theorem}}, a result like \myRef{Theorem \ref{main theorem real case}} that involves $h_{2}$ is also possible, and it essentially falls out of our proof directly.  \myRef{Theorem \ref{main theorem real case}} is an improvement over \myRef{Theorem \ref{main theorem}} when $n^{-1/3} \gg 1- h_{\infty} / T$ and gives a meaningful bound provided $1-h_{\infty} / T \gg \log(n)^2 / n$.  Although this yields a quantitatively better understanding for matrices over $\R$, we cannot shake the belief that neither of our main results (i.e., \myRef{Theorems \ref{main theorem}} and \myRef{\ref{main theorem real case}}) is best possible, and we discuss this further in \myRef{Section \ref{section conclusion}}.

\subsection*{Structure of paper}
The paper is devoted to proving \myRef{Theorems \ref{main theorem}} and \myRef{\ref{main theorem real case}}, which goes roughly as follows.  First, we appeal to a result of Glynn \cite{glynn} that allows us to convert the problem of estimating the permanent into a problem about estimating the expected value of a certain random variable (\mySection{\ref{section set-up}}).  We then use standard probabilistic tools to show certain concentration results for the random variable of interest, which in turn yield the estimates needed for our results.  This is done for the complex-valued case in \mySection{\ref{section complex}}, which proves \myRef{Theorem \ref{main theorem}}.  In \mySection{\ref{section real}}, we consider the real-valued case, where we analyze the corresponding random variable more carefully to obtain \myRef{Theorem \ref{main theorem real case}}.  We conclude in \mySection{\ref{section conclusion}} with several open questions and conjectures, as well as a discussion of \myRef{Question B}.

\section{Definitions and set-up with random variables}\label{section set-up}
We first need to use an observation due to Glynn \cite{glynn} whereby the permanent of a matrix is expressed as the expectation of a certain random variable.  We will work over the field $\K$, which will either be $\R$ or $\C$.

\paragraph*{}Given an $n \times n$ matrix $A$ over $\K$ and $x \in \K^n$, set $y = Ax$, and define the \textit{Glynn estimator} of $A$ at $x$ to be
\[
Gly_{x} (A) = \prod_{i=1} ^{n} \overline{x}_i  \times \prod_{i=1} ^{n} y_i,
\]
where $\overline{z}$ denotes the complex conjugate of $z$.  Let $X \in \K^n$ be the random variable whose coordinates are independently selected uniformly on $|z| = 1$, and let $Y = AX$ (note: if $\K = \C$, then each coordinate of $X$ is distributed continuously over the unit circle, whereas if $\K= \R$, then $X$ is chosen uniformly from the discrete set $\{-1,1\}^n$).  Then
\[
\perm{A} = \E [Gly_{X} (A)] = \E \left[ \prod_{i=1} ^{n} \overline{X}_i Y_i \right],
\]
obtained simply by expanding out the product in the Glynn estimator and using the fact that the $X_i$ are independent with mean $0$ and variance $1$ (see the original proof due to Glynn \cite{glynn} or also \cite{gurvits, aaronson, nguyen}).  Therefore, by convexity (which we are about to use twice), we have
\[
| \perm{A} | \leq \E \left[ \prod_{i=1} ^{n} | \overline{X}_i Y_i| \right] = \E \left[ \prod_{i=1} ^{n} |Y_i| \right] \leq \E \left[ \left( \dfrac{1}{n}\sum_{i=1} ^{n} |Y_i| \right) ^{n} \right] = \E \left[ \left( \dfrac{\Vert AX \Vert_{1}}{n} \right) ^{n} \right].
\]
Note that from here, we could say (by Cauchy-Schwartz)
\[
\dfrac{\Vert AX \Vert_{1}}{n} \leq \dfrac{\Vert AX \Vert_{2}}{\sqrt{n}} = \dfrac{\Vert AX \Vert_{2}}{\Vert X \Vert_{2}} \leq \Vert A \Vert_{2},
\]
thus obtaining the inequality $| \perm{A} | \leq \Vert A \Vert_2 ^n$ of \myRef{Theorem \ref{norm result}} (the equality case follows by considering equality in the above estimates).

\subsection*{Specializing to norm at most 1}
Note that to prove our results, it suffices to prove them for the case $\Vert A \Vert_{2} \leq 1$.  This is because otherwise, we could simply scale the matrix by some $\alpha$ to have norm at most 1, and because $\perm{A} = \alpha^n \perm{A/ \alpha}$, our results would follow.  As such, we will henceforth assume $\Vert A \Vert_2 \leq 1$ (explicitly making note of when we do), but this choice is simply for notational ease.  We remark that the set-up thus far has also been employed in several other papers \cite{gurvits, aaronson, nguyen}; however, the remainder of this paper deviates from the previous literature.

\section{Proof of \myRef{Theorem \ref{main theorem}} ($\K = \C$)}\label{section complex}
In the setting where $\Vert A \Vert_2 \leq 1$, the permanent is always bounded above by $1$ (as shown above), and we want to conclude that under certain conditions, it must be (exponentially) small.  We know (since $0 \leq \Vert A X \Vert_1 /n \leq \Vert A \Vert_{2} \leq 1$) that for all $\varepsilon \geq 0$ and all $\tilde{\mu} \geq 0$,
\[
|\perm{A}| \leq \E \left[ \left( \dfrac{\Vert AX \Vert_{1}}{n} \right) ^{n} \right] \leq (\tilde{\mu} /n + \varepsilon)^n + \pr (\Vert A X \Vert_1 \geq \tilde{\mu} +  \varepsilon n).
\]
We will pick $\tilde{\mu}$ suitably small with $\tilde{\mu} \geq \E[ \Vert AX \Vert_1 ]$ and then argue that $\Vert A X \Vert_1$ is tightly concentrated about its mean, which will complete the proof.

\subsection*{The mean of $\Vert AX\Vert_1$}
We appeal to a theorem of K\"onig, Sch\"utt, and Tomczak-Jaegermann \cite{konig}, which is a variant of Khintchine's inequality conveniently well-suited for our situation (in fact, $X$ was chosen in part so that we could apply this result directly).
\begin{theorem}[K\"onig et al.\ \cite{konig}, $1999$] \label{konig_ineq}
Let $\K$ be $\R$ or $\C$.  Suppose $\vec{a} = (a_1, \ldots , a_n) \in \K^n$ is fixed, and suppose each coordinate of $\xi \in \K^n$ is independently distributed uniformly on $|z| = 1$.  Then
\[
\left| \E \left[ \left| \sum_{i} a_i \xi_i \right| \right] - \Lambda_{\K} \Vert \vec{a} \Vert_{2} \right| \leq \left(1 - \Lambda_{\K} \right) \Vert \vec{a} \Vert_{\infty},
\]
where $\Lambda_{\R} = \sqrt{2/ \pi}$ and $\Lambda_{\C} = \sqrt{\pi}/2$.
\end{theorem}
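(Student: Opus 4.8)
The plan is to prove the two-sided estimate by writing $\E\big[|\sum_i a_i\xi_i|\big]$ via an integral representation of the absolute value and comparing the resulting expression against the Gaussian case, from which the constants $\Lambda_\K$ emerge. First I would reduce to the normalized situation $\Vert\vec a\Vert_2 = 1$ (the inequality is homogeneous of degree one in $\vec a$) and, using the rotation invariance of the uniform law on $|z| = 1$, absorb the phases of the $a_i$ into the $\xi_i$, so that we may assume $a_i \ge 0$ for all $i$; in the real case this just amounts to replacing $\xi_i$ by $\pm\xi_i$. It is worth recording at the outset that $\Lambda_\R = \sqrt{2/\pi} = \E|g|$ for $g \sim N(0,1)$ and $\Lambda_\C = \sqrt\pi/2 = \E|Z|$ for a standard complex Gaussian $Z$ with $\E|Z|^2 = 1$; by the central limit theorem $\Lambda_\K\Vert\vec a\Vert_2$ is precisely the limiting value of $\E\big[|\sum_i a_i\xi_i|\big]$ as the mass of $\vec a$ is spread over more and more coordinates, so the theorem asserts that the discrepancy from this ``Gaussian value'' is never larger than $(1-\Lambda_\K)\Vert\vec a\Vert_\infty$.

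The main tool is the identity $|t| = \frac{2}{\pi}\int_0^\infty \frac{1 - \cos(st)}{s^2}\dd s$ for real $t$ (and its two-dimensional analogue $|z| = c\int_{\R^2} \frac{1 - \cos\langle w, z\rangle}{|w|^3}\dd w$ for $z \in \C$), which after taking expectations and using independence turns $\E\big[|\sum_i a_i\xi_i|\big]$ into $\frac{2}{\pi}\int_0^\infty \frac{1 - \prod_i \psi(sa_i)}{s^2}\dd s$, where $\psi$ is the common characteristic function of the coordinates of $\xi$: $\psi(u) = \cos u$ when $\K = \R$ and $\psi(u) = J_0(u)$ (a Bessel function) when $\K = \C$. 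Running the same computation with a Gaussian of the same second moments replaces $\prod_i\psi(sa_i)$ by $e^{-s^2/2}$ and, after evaluating $\int_0^\infty \frac{1 - e^{-s^2/2}}{s^2}\dd s = \sqrt{\pi/2}$ (and its complex counterpart), reproduces exactly $\Lambda_\K$. Thus it suffices to bound
\[
\frac{2}{\pi}\int_0^\infty \frac{\big|\prod_i\psi(sa_i) - e^{-s^2/2}\big|}{s^2}\dd s \;\le\; (1-\Lambda_\K)\Vert\vec a\Vert_\infty .
\]

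To estimate this I would split the integral at a threshold $s \asymp 1/\Vert\vec a\Vert_\infty$. On the small-$s$ range, $\psi(u) = 1 - u^2/2 + O(u^4)$ gives $\prod_i\psi(sa_i) = e^{-s^2/2}\big(1 + O(s^4\sum_i a_i^4)\big)$, and then $\sum_i a_i^4 \le \Vert\vec a\Vert_\infty^2 \sum_i a_i^2 = \Vert\vec a\Vert_\infty^2$ makes this range contribute $O(\Vert\vec a\Vert_\infty)$ with an explicit constant. On the large-$s$ range both $\prod_i\psi(sa_i)$ and $e^{-s^2/2}$ are small; here one uses $|\psi| \le 1$, so $\prod_i|\psi(sa_i)| \le |\psi(s\Vert\vec a\Vert_\infty)| \le 1$ (discarding all but a largest-coordinate factor), whence the integrand is $\le 2/s^2$ and the tail contributes $O(\Vert\vec a\Vert_\infty)$ as well. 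Adding the two pieces, and tracking constants, yields the claimed bound.

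The main obstacle is making the coefficient of $\Vert\vec a\Vert_\infty$ come out exactly as $1 - \Lambda_\K$ rather than as some cruder absolute constant. This forces the relevant integrals to be evaluated exactly (the appearances of $\pi/2$, $\Gamma(3/2)$, and the Bessel integral $\int_0^\infty \frac{1 - J_0(u)}{u^2}\dd u$), and the tail estimate must be essentially lossless: discarding all but the largest factor of $\prod_i\psi(sa_i)$ wastes too much precisely at the values of $s$ where $|\psi(s\Vert\vec a\Vert_\infty)|$ is near its maximum, so one must there retain further factors or argue more carefully. A tempting alternative in the real case is induction on $n$ through the exact identity $\E\big|\sum_{i\le n}a_i\xi_i\big| = \E\big[\max\!\big(|a_n|,\,\big|\sum_{i<n}a_i\xi_i\big|\big)\big]$, which follows from $|x+y| + |x-y| = 2\max(|x|,|y|)$ and the symmetry of the $\xi_i$; this reduces the problem cleanly but seems to require information about $\sum_{i<n}a_i\xi_i$ beyond its first two moments, so I would keep the integral-representation argument as the workhorse and cite the identity only for motivation.
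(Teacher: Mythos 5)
First, a point of comparison: the paper does not prove this statement at all --- it is quoted from K\"onig--Sch\"utt--Tomczak-Jaegermann \cite{konig} and used as a black box (in Proposition \ref{mean bound} and in the choice of $\tilde{\mu}_B, \tilde{\mu}_L$). So you are supplying a proof where the paper supplies a citation. Your framework --- normalize $\Vert \vec{a} \Vert_2 = 1$, absorb phases so $a_i \geq 0$, write $\E\left[\left|\sum_i a_i \xi_i\right|\right]$ via $|t| = \frac{2}{\pi}\int_0^\infty \frac{1-\cos(st)}{s^2}\,ds$ and its planar analogue, and compare $\prod_i \psi(s a_i)$ with $e^{-s^2/2}$ --- is the natural one for results of this type, and your identification of $\Lambda_\K$ as the Gaussian first absolute moment is correct.

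However, the reduction you make is to a \emph{false} statement, so the argument as proposed cannot close. Take $n=1$, $a = (1)$, $\K = \R$: the theorem holds with equality, since $\E|a_1\xi_1| - \Lambda_\R = 1 - \sqrt{2/\pi} = (1-\Lambda_\R)\Vert \vec{a} \Vert_\infty$. The signed integral $\frac{2}{\pi}\int_0^\infty \frac{e^{-s^2/2} - \cos s}{s^2}\,ds$ equals exactly $1 - \sqrt{2/\pi}$, but the integrand changes sign (it is $\approx s^4/12 > 0$ near $0$ and $\approx -1$ at $s = 2\pi$), so $\frac{2}{\pi}\int_0^\infty \frac{|e^{-s^2/2} - \cos s|}{s^2}\,ds$ is \emph{strictly larger} than $1 - \sqrt{2/\pi}$. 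Hence your displayed ``it suffices to bound'' inequality already fails in the extremal case, and no choice of cutoff or sharpening of the two crude estimates can repair it; any correct proof must retain the sign of $\prod_i \psi(sa_i) - e^{-s^2/2}$ and exploit cancellation. (Quantitatively, your tail bound alone contributes $\frac{2}{\pi}\int_{1/\Vert a\Vert_\infty}^\infty 2s^{-2}\,ds = \frac{4}{\pi}\Vert \vec{a}\Vert_\infty \approx 1.27\,\Vert \vec{a}\Vert_\infty$ against a target of $\approx 0.20\,\Vert \vec{a}\Vert_\infty$.) The missing idea is that the constant $1-\Lambda_\K$ must arise as the exact difference of the two single-coordinate integrals $\frac{2}{\pi}\int_0^\infty \frac{1-\cos(a_1 s)}{s^2}\,ds = a_1$ and $\frac{2}{\pi}\int_0^\infty \frac{1-e^{-a_1^2 s^2/2}}{s^2}\,ds = \Lambda_\R a_1$ with $a_1 = \Vert \vec{a}\Vert_\infty$; one therefore needs a two-sided \emph{pointwise} comparison of $\prod_i \psi(sa_i)$ against something like $e^{-s^2/2} - e^{-a_1^2 s^2/2} + \psi(a_1 s)$, isolating the largest coordinate, rather than an absolute-value bound on the distance to $e^{-s^2/2}$. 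Your closing identity $\E\left|\sum_{i \leq n} a_i\xi_i\right| = \E\left[\max\left(|a_n|, \left|\sum_{i<n} a_i\xi_i\right|\right)\right]$ is in fact closer in spirit to a lossless argument than the integral split, but as written neither route is carried out.
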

Applying this to each row of $A$ (and using linearity of expectation) gives

\begin{proposition}\label{mean bound}
With $A$ and $X \in \C^n$ as in \myRef{Section \ref{section set-up}}, we have
\[
\E[\Vert A X \Vert_1 /n] \leq \dfrac{1}{n} \sum_{i=1} ^{n} \left[\sqrt{\pi}/2 \Vert r_i \Vert_2 + \left(1 - \sqrt{\pi}/2 \right) \Vert r_i \Vert_{\infty} \right] =  \dfrac{\sqrt{\pi}}{2} h_{2}(A) + \left(1 - \dfrac{\sqrt{\pi}}{2} \right)  h_{\infty}(A).
\]
\end{proposition}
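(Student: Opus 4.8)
The plan is to bound $\E[\Vert AX\Vert_1]$ one coordinate at a time and then normalize by $n$. Writing $\Vert AX\Vert_1 = \sum_{i=1}^n |(AX)_i|$, observe that the $i$-th coordinate of $AX$ is $(AX)_i = \sum_{j=1}^n a_{i,j} X_j$, i.e., a fixed linear combination of the entries of $X$ whose coefficient vector is exactly the $i$-th row $r_i$ of $A$. Since the coordinates of $X$ are, by the construction in \mySection{\ref{section set-up}}, independently distributed uniformly on $|z| = 1$ --- precisely the hypothesis of \myRef{Theorem \ref{konig_ineq}} --- I would apply that theorem over $\K = \C$ with $\vec{a} = r_i$ and $\xi = X$. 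Recalling $\Lambda_\C = \sqrt{\pi}/2$, its two-sided conclusion gives in particular the one-sided bound
\[
\E\!\left[\,|(AX)_i|\,\right] \le \frac{\sqrt{\pi}}{2}\,\Vert r_i \Vert_2 + \left(1 - \frac{\sqrt{\pi}}{2}\right)\Vert r_i \Vert_\infty .
\]

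Summing this over $i = 1, \ldots, n$ and invoking linearity of expectation gives $\E[\Vert AX\Vert_1] \le \sum_{i=1}^n \left[ \frac{\sqrt{\pi}}{2}\Vert r_i\Vert_2 + \left(1 - \frac{\sqrt{\pi}}{2}\right)\Vert r_i\Vert_\infty \right]$. Dividing by $n$ and using the definitions $h_2(A) = \frac{1}{n}\sum_i \Vert r_i\Vert_2$ and $h_\infty(A) = \frac{1}{n}\sum_i \Vert r_i\Vert_\infty$ then yields the stated equality and inequality.

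I do not anticipate any genuine obstacle here: the only points worth a moment's thought are that the random model for $X$ fixed in \mySection{\ref{section set-up}} matches the one required by K\"onig et al.\ verbatim (it does, by design), and that $1 - \sqrt{\pi}/2 > 0$, so the bound has the expected nonnegative combination of $\Vert r_i\Vert_2$ and $\Vert r_i\Vert_\infty$. All of the real content is packaged inside the K\"onig--Sch\"utt--Tomczak-Jaegermann inequality, which we are using as a black box; the proposition is simply its row-by-row consequence.
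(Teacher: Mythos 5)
Your proof is correct and is exactly the paper's argument: apply Theorem~\ref{konig_ineq} row by row to $(AX)_i = \langle r_i, X\rangle$, sum via linearity of expectation, and normalize by $n$. Nothing to add.
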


\subsection*{Concentration about mean}
To show concentration of $\Vert A X\Vert_1$ about its mean, we use a very general and useful result of Talagrand (a form of ``Talagrand's inequality"), which can be found in chapter 1 of his book \cite{talagrand}.
\begin{theorem}[Talagrand \cite{talagrand}, $1991$]\label{talagrand gaussian theorem}
Suppose $f : \R^n \to \R$ is such that $|f(x) - f(y)| \leq \sigma \Vert x - y \Vert_2$ for all $x, y \in \R^n$, and define the random variable $F = f( \xi_1, \xi_2, \ldots, \xi_n)$, where the $\xi_i$ are independent standard normal random variables.  Then for all $t \geq 0$, 
\[
\pr(F > \E[F] +t) \leq e^{-2 t^2 / (\pi \sigma)^2}.
\]
\end{theorem}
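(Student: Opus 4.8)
The plan is to prove this by the Maurey--Pisier rotation-interpolation method, which is designed to yield exactly the constant $2/(\pi\sigma)^2$ in the statement. First I would reduce to the case of smooth $f$: replacing $f$ by its mollification $f_\varepsilon = f * \gamma_\varepsilon$ against a Gaussian kernel produces a $C^\infty$ function that is still $\sigma$-Lipschitz, hence satisfies $\Vert\nabla f_\varepsilon\Vert_2 \le \sigma$ pointwise, and $f_\varepsilon \to f$ uniformly on compact sets. Since $f(\xi)$ and each $f_\varepsilon(\xi)$ are Lipschitz images of a Gaussian vector, they have exponential moments that are controlled uniformly in $\varepsilon$, so the tail bound for $f_\varepsilon$ passes to the limit. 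It therefore suffices to prove the inequality for smooth $\sigma$-Lipschitz $f$.

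For the core step, let $X, Y$ be independent $N(0, I_n)$ vectors, and for $\theta \in [0,\pi/2]$ put $X_\theta = X\sin\theta + Y\cos\theta$ and $X_\theta' = \tfrac{d}{d\theta}X_\theta = X\cos\theta - Y\sin\theta$. A direct covariance computation shows that for each fixed $\theta$ the pair $(X_\theta, X_\theta')$ is jointly Gaussian with the same law as $(X, Y)$; in particular $X_\theta$ and $X_\theta'$ are independent and each is $N(0, I_n)$. Since $X_{\pi/2} = X$ and $X_0 = Y$, the chain rule gives $f(X) - f(Y) = \int_0^{\pi/2} \langle \nabla f(X_\theta), X_\theta'\rangle\,d\theta$. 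Fixing $\lambda > 0$ and applying Jensen's inequality to $u \mapsto e^{\lambda u}$ with respect to the uniform probability measure $\tfrac{2}{\pi}\,d\theta$ on $[0,\pi/2]$,
\[
e^{\lambda(f(X)-f(Y))} = \exp\!\Big(\tfrac{\pi}{2}\lambda\cdot\tfrac{2}{\pi}\int_0^{\pi/2}\langle\nabla f(X_\theta),X_\theta'\rangle\,d\theta\Big) \le \tfrac{2}{\pi}\int_0^{\pi/2}\exp\!\big(\tfrac{\pi}{2}\lambda\langle\nabla f(X_\theta),X_\theta'\rangle\big)\,d\theta.
\]
Taking expectations and using $(X_\theta, X_\theta') \stackrel{d}{=} (X,Y)$ for each $\theta$ yields $\E e^{\lambda(f(X)-f(Y))} \le \E\exp\!\big(\tfrac{\pi}{2}\lambda\langle\nabla f(X),Y\rangle\big)$. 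Conditioning on $X$, the variable $\langle\nabla f(X),Y\rangle$ is $N(0,\Vert\nabla f(X)\Vert_2^2)$, so its moment generating function equals $\exp\!\big(\tfrac{\pi^2\lambda^2}{8}\Vert\nabla f(X)\Vert_2^2\big) \le \exp\!\big(\tfrac{\pi^2\sigma^2\lambda^2}{8}\big)$.

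Finally, writing $\E e^{\lambda(f(X)-f(Y))} = \E e^{\lambda(F-\E F)}\cdot\E e^{-\lambda(F-\E F)}$ and using $\E e^{-\lambda(F-\E F)} \ge 1$ by Jensen, I obtain $\E e^{\lambda(F-\E F)} \le \exp(\pi^2\sigma^2\lambda^2/8)$ for every $\lambda > 0$. Markov's inequality then gives $\pr(F > \E F + t) \le \exp(-\lambda t + \pi^2\sigma^2\lambda^2/8)$, and optimizing at $\lambda = 4t/(\pi^2\sigma^2)$ produces $\pr(F > \E F + t) \le e^{-2t^2/(\pi\sigma)^2}$. The genuinely delicate points are the smoothing reduction — checking that the exponential-moment control is uniform in $\varepsilon$ so the limit is valid, and that differentiation of $\theta \mapsto f(X_\theta)$ under the integral is legitimate — together with the verification that $X_\theta$ and $X_\theta'$ are independent with the right joint law; everything past that is Jensen and a Gaussian moment computation, so I expect the interpolation identity and its measure-theoretic justification to be where the real work lies.
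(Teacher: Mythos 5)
The paper does not prove this statement at all --- it is quoted verbatim as a known result from Chapter 1 of Talagrand's book and used as a black box --- so there is no internal proof to compare against. Your argument is a correct and complete derivation, and it is essentially the classical Maurey--Pisier interpolation proof, which is precisely the argument that produces the constant $2/(\pi\sigma)^2$ (equivalently, the sub-Gaussian bound $\E e^{\lambda(F-\E F)} \le e^{\pi^2\sigma^2\lambda^2/8}$). All the steps check out: the covariance computation showing $(X_\theta, X_\theta') \stackrel{d}{=} (X,Y)$ for each fixed $\theta$ is right; the Jensen step against the normalized measure $\tfrac{2}{\pi}\,d\theta$ correctly introduces the factor $\pi/2$ inside the exponential; the conditional Gaussian moment generating function gives $\exp(\pi^2\lambda^2\Vert\nabla f(X)\Vert_2^2/8)$; the symmetrization/Jensen step $\E e^{-\lambda(F-\E F)}\ge 1$ legitimately removes the independent copy; and the Chernoff optimization at $\lambda = 4t/(\pi^2\sigma^2)$ yields exactly $e^{-2t^2/(\pi\sigma)^2}$. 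The smoothing reduction is also sound --- in fact $f_\varepsilon \to f$ uniformly on all of $\R^n$ (since $|f_\varepsilon(x)-f(x)| \le \sigma\varepsilon\,\E\Vert Z\Vert_2$), so $\E[f_\varepsilon(\xi)] \to \E[f(\xi)]$ directly and the worry about uniform exponential-moment control is not even needed. The only cosmetic remark is that the pathwise identity $f(X)-f(Y)=\int_0^{\pi/2}\langle\nabla f(X_\theta),X_\theta'\rangle\,d\theta$ is just the fundamental theorem of calculus applied along each sample path for smooth $f$; no differentiation under an expectation is required there, so that ``delicate point'' is less delicate than you suggest.
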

We apply this result to our setting by way of a now standard trick that expresses our random variable of interest as a function of standard Gaussians.  In fact, this trick is even discussed in \cite{talagrand}, so we could have saved a few lines of the following argument by simply citing a ``more applicable" version of \myRef{Theorem \ref{talagrand gaussian theorem}} (i.e., one for which this trick has already been incorporated); however, the trick so nicely captures the usefulness of \myRef{Theorem \ref{talagrand gaussian theorem}}, that we thought it worth recalling here.
\begin{proposition}\label{our concentration}
Suppose $\Vert A \Vert_2 \leq 1$, and let $X \in \C^n$ be as in \myRef{Section \ref{section set-up}}.  Then for all $t \geq 0$,
\[
\pr(\Vert AX \Vert_1 > \E[\Vert AX \Vert_1] + tn) \leq e^{- n t^2 / \pi^3}.
\]
\end{proposition}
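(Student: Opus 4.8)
The plan is to realize $\Vert AX\Vert_1$ as a Lipschitz function of a standard Gaussian vector and then feed this into Talagrand's inequality (Theorem \ref{talagrand gaussian theorem}). The one point that requires care is the choice of parametrization: the ``obvious'' way to generate a uniform point on the complex torus from Gaussians, namely $(G_{2j-1},G_{2j}) \mapsto (G_{2j-1}+iG_{2j})/\sqrt{G_{2j-1}^2+G_{2j}^2}$, is \emph{not} globally Lipschitz (its differential blows up near the origin), so Theorem \ref{talagrand gaussian theorem} would not apply to it. Instead I would use the inverse-CDF trick with one Gaussian per coordinate. Let $G=(G_1,\dots,G_n)$ have i.i.d.\ standard normal coordinates, let $\Phi$ and $\varphi$ denote the standard normal CDF and density, and define $\psi\colon\R^n\to\C^n$ by $\psi(G)_j = e^{2\pi i\,\Phi(G_j)}$. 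Since $\Phi(G_j)$ is uniform on $[0,1]$, each $\psi(G)_j$ is uniform on $\{|z|=1\}$ and the coordinates are independent, so $\psi(G)$ has exactly the law of $X$; in particular $F:=\Vert A\psi(G)\Vert_1$ has the same distribution as $\Vert AX\Vert_1$, and it suffices to prove the tail bound for $F$.

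Next I would bound the relevant Lipschitz constants and compose. The scalar map $g\mapsto e^{2\pi i\,\Phi(g)}$ from $\R$ to $\C$ has derivative of modulus $2\pi\varphi(g)\le 2\pi\varphi(0)=\sqrt{2\pi}$, and since $\psi$ acts coordinatewise this upgrades to $\Vert\psi(G)-\psi(G')\Vert_2 \le \sqrt{2\pi}\,\Vert G-G'\Vert_2$. Also, for any $v,w\in\C^n$, using $\Vert u\Vert_1\le\sqrt{n}\,\Vert u\Vert_2$ (Cauchy--Schwarz) and $\Vert A\Vert_2\le 1$,
\[
\bigl|\,\Vert Av\Vert_1-\Vert Aw\Vert_1\,\bigr| \le \Vert A(v-w)\Vert_1 \le \sqrt{n}\,\Vert A(v-w)\Vert_2 \le \sqrt{n}\,\Vert v-w\Vert_2 .
\]
Composing these two bounds, $|F(G)-F(G')|\le \sqrt{2\pi n}\,\Vert G-G'\Vert_2$, i.e.\ $F$ is a $\sigma$-Lipschitz function of $n$ independent standard Gaussians with $\sigma=\sqrt{2\pi n}$. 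Theorem \ref{talagrand gaussian theorem} then gives $\pr(F>\E[F]+s)\le e^{-2s^2/(\pi\sigma)^2}=e^{-s^2/(\pi^3 n)}$ for all $s\ge 0$; setting $s=tn$ and recalling $F\stackrel{d}{=}\Vert AX\Vert_1$ yields exactly the stated bound.

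The only genuine obstacle is the one flagged above: arranging that the hypothesis of Theorem \ref{talagrand gaussian theorem} (a \emph{global} Lipschitz bound) actually holds, which is what forces the inverse-CDF parametrization rather than the naive radial-projection one. This is also the reason the proposition is stated only over $\C$: over $\R$ the analogous parametrization would be (essentially) $g\mapsto\mathrm{sign}(g)$, which is discontinuous, so the real case genuinely requires the more delicate argument carried out in Section \ref{section real}. Everything else is routine tracking of constants, and indeed one could shorten the write-up by instead quoting a version of Talagrand's inequality in which this Gaussian-parametrization step has already been absorbed.
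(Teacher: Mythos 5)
Your proposal is correct and is essentially the same argument as the paper's: the inverse-CDF parametrization $\psi(G)_j=e^{2\pi i\Phi(G_j)}$, the $\ell_1\le\sqrt{n}\,\ell_2$ and $\Vert A\Vert_2\le 1$ bounds, the Lipschitz constant $\sqrt{2\pi n}$, and Talagrand's Gaussian inequality with $\sigma=\sqrt{2\pi n}$. The only cosmetic difference is that you bound the Lipschitz constant of $g\mapsto e^{2\pi i\Phi(g)}$ by differentiating, whereas the paper uses $|e^{i\alpha}-1|\le|\alpha|$ together with the Lipschitz bound on $\Phi$; both yield the same constant.
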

\begin{proof}
To make use of \myRef{Theorem \ref{talagrand gaussian theorem}}, we need to define a suitable $f : \R^n \to \R$, which we do in pieces.  First define $\Phi : \R \to \R$ via
\[
\Phi(u) = \dfrac{1}{\sqrt{2 \pi}} \int_{-\infty} ^{u} e^{-x ^2 / 2} \dd{x},
\]
which is the probability that a standard Gaussian is at most $u$.  Then define $g : \R^n \to \C^n$ as
\[
g(x_1, \ldots , x_n) = \begin{pmatrix}
e^{2 \pi i \Phi(x_1)}\\
e^{2 \pi i \Phi(x_2)}\\
\vdots \\
e^{2 \pi i \Phi(x_n)}
\end{pmatrix},
\]
and, finally, set $f(x) = \Vert A g(x) \Vert_{1}$.
\paragraph*{}Notice that if $\xi_1, \xi_2, \ldots, \xi_n$ are independently sampled from the standard normal distribution, then each $\Phi(\xi_i)$ is distributed uniformly on $[0,1]$.  Therefore $g(\xi_1, \ldots, \xi_n)$ has the same distribution as $X$, and so $F := f(\xi_1, \ldots, \xi_n)$ has the same distribution as $\Vert AX \Vert_1$.
\paragraph*{}Now let $x, y \in \R^n$ be arbitrary.  Then we have
\begin{eqnarray*}
|f(x) - f(y)| &=& \Big | \Vert A g(x) \Vert_{1} - \Vert A g(y) \Vert_1 \Big | \leq \Vert A g(x) - A g(y) \Vert_{1} \leq \sqrt{n} \Vert A (g(x) - g(y)) \Vert_{2}\\
&\leq& \sqrt{n} \Vert A \Vert_2 \Vert g(x) - g(y) \Vert_{2} \leq \sqrt{n} \Vert g(x) - g(y) \Vert_{2}.
\end{eqnarray*}
Using the fact that $| e^{i \alpha} - 1 | \leq | \alpha|$ for all $\alpha \in \R$, we further bound the above by
\begin{eqnarray*}
\Vert g(x) - g(y) \Vert_{2} ^2 &=& \sum_{j=1} ^{n} |e^{2 \pi i \Phi(x_j)} - e^{2 \pi i \Phi(y_j)}|^2  =\sum_{j=1} ^{n} |e^{2 \pi i (\Phi(x_j) - \Phi(y_j))} - 1|^2\\
&\leq& (2 \pi)^2 \sum_{j=1} ^{n} |\Phi(x_j) - \Phi(y_j)|^2 \leq 2 \pi \sum_{j=1} ^{n} |x_j - y_j|^2 = 2 \pi \Vert x - y \Vert_2 ^{2}.
\end{eqnarray*}
Thus, $|f(x) - f(y) | \leq \sqrt{2 \pi n} \Vert x - y \Vert_2$, and appealing to \myRef{Theorem \ref{talagrand gaussian theorem}} with $\sigma = \sqrt{2 \pi n}$ yields
\[
\pr(\Vert AX \Vert_1 > \E[\Vert A X \Vert_1] + tn) = \pr(F > \E[F] + tn) \leq e^{- 2 (n t)^2 / (\pi \sqrt{2 \pi n})^2} = e^{-  n t^2 / \pi ^3 }. \qedhere
\]
\end{proof}

\subsection*{Finishing the proof for $\K = \C$}

\begin{proposition}\label{perm bound in terms of mean}
Let $\Vert A \Vert_2 \leq 1$ and $X \in \C^n$ be as in \myRef{Section \ref{section set-up}}.  If $\E[\Vert A X \Vert_1 /n] = \mu$, then
\[
\E[(\Vert AX \Vert_1 /n)^n] \leq 2 \exp [-3 n (1-\mu)^2 / 100].
\]
\end{proposition}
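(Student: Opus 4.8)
The plan is to combine the deterministic bound $0 \le \Vert AX\Vert_1/n \le 1$ (established in Section~\ref{section set-up}) with the concentration estimate of Proposition~\ref{our concentration}. Write $Z = \Vert AX\Vert_1/n$, so that $Z \in [0,1]$ almost surely and $\E[Z] = \mu$; in particular $\mu \le 1$. For any $\varepsilon \ge 0$ I would split according to the event $\{Z \le \mu+\varepsilon\}$: on that event $Z^n \le (\mu+\varepsilon)^n$, while on its complement I use only the trivial bound $Z^n \le 1$. This gives
\[
\E[Z^n] \le (\mu+\varepsilon)^n + \pr(Z > \mu+\varepsilon).
\]

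For the tail term, Proposition~\ref{our concentration} applied with $t = \varepsilon$ yields $\pr(Z > \mu+\varepsilon) = \pr(\Vert AX\Vert_1 > \E[\Vert AX\Vert_1] + \varepsilon n) \le e^{-n\varepsilon^2/\pi^3}$. For the first term I will restrict to $\varepsilon$ with $\mu+\varepsilon \le 1$, so that $1+x \le e^x$ gives $(\mu+\varepsilon)^n \le e^{-n(1-\mu-\varepsilon)}$. Hence
\[
\E[Z^n] \le e^{-n(1-\mu-\varepsilon)} + e^{-n\varepsilon^2/\pi^3}.
\]

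It remains to optimize over $\varepsilon$. I would set $\varepsilon = c(1-\mu)$ for a numerical constant $c \in (0,1)$; then $\mu+\varepsilon = c + (1-c)\mu \le 1$ as required, $1-\mu-\varepsilon = (1-c)(1-\mu) \ge (1-c)(1-\mu)^2$ (since $1-\mu \le 1$), and $\varepsilon^2/\pi^3 = c^2(1-\mu)^2/\pi^3$. Thus both exponents are at least $n(1-\mu)^2 \min\{1-c,\ c^2/\pi^3\}$, and choosing $c$ in the range $[0.965,\,0.97]$ makes this minimum at least $3/100$. Each of the two terms is then at most $\exp[-3n(1-\mu)^2/100]$, and summing proves the proposition. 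The degenerate case $\mu = 1$ is trivial since then $Z^n \le 1 \le 2$.

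The argument is genuinely short, and the only subtle point is the numerical balancing: the target constant $3/100$ is essentially dictated by the factor $\pi^3$ coming from Talagrand's inequality, so there is almost no slack. One must take $\varepsilon$ quite close to the full gap $1-\mu$ — roughly $0.97(1-\mu)$ — so that it is the mean term $(\mu+\varepsilon)^n$, rather than the concentration term, that we can afford; a naive choice such as $\varepsilon = (1-\mu)/2$ would leave the concentration term too large and fail.
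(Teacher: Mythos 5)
Your proof is correct and follows essentially the same strategy as the paper: split $\E[(\Vert AX\Vert_1/n)^n]$ at a threshold $L = \mu + \varepsilon$ lying between $\mu$ and $1$, bound $L^n \le e^{-n(1-L)}$, control the tail via Proposition~\ref{our concentration}, and then optimize the threshold. The only difference is cosmetic: the paper solves for the balance point where the two exponents are exactly equal and then Taylor-expands the resulting expression around $\mu=1$, whereas you fix $\varepsilon = c(1-\mu)$, replace the linear term $(1-c)(1-\mu)$ by the weaker $(1-c)(1-\mu)^2$, and read off the constant from $\min\{1-c,\,c^2/\pi^3\}$ --- a slightly cruder but cleaner finish that lands on the same constant $3/100$.
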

\begin{proof}
Let $L = t \mu + (1-t)$ with $t \in [0, 1]$ to be determined.  Since $0 \leq \Vert AX \Vert_1 / n \leq 1$, we have (appealing to \myRef{Proposition \ref{our concentration}} for the last inequality)
\begin{eqnarray*}
\E[(\Vert AX \Vert_1 / n)^n] & \leq & L^n + \pr(\Vert AX \Vert_1/n > L)\\
&\leq& \exp[-n(1-L)] + \pr(\Vert AX \Vert_1/n - \mu > (1-t)(1 - \mu))\\
&\leq& \exp[-nt(1-\mu)] + \exp[-n (1-t)^2 (1-\mu)^2 / \pi^3 ],
\end{eqnarray*}
We now take $2t(1-\mu) = \pi^3+2 -2 \mu - \pi^{3/2} \sqrt{\pi^3 + 4-4 \mu} $ (for which $t$ does lie in the interval $[0,1]$), so as to make the exponents equal.  For this $t$, we obtain
\[
\E[(\Vert AX \Vert_1/n)^n] \leq 2\exp \Big [-n (2 \mu + \pi^{3/2} \sqrt{\pi^3 + 4-4 \mu}- \pi^3-2)/2 \Big ].
\]
Then appealing to the Taylor series at $\mu =1$, we see that for all $\mu \in [0,1]$,
\[
\dfrac{2 \mu + \pi^{3/2} \sqrt{\pi^3 + 4-4\mu}- \pi^3-2}{2} \geq \dfrac{(1-\mu)^2}{\pi^3} - \dfrac{2(1-\mu)^3}{ \pi^6} \geq (1-\mu)^2 \left( \dfrac{1}{\pi^3} - \dfrac{2}{\pi^6} \right) \geq \dfrac{3 (1- \mu)^2}{100}. \qedhere
\]
\end{proof}

\paragraph*{}We then readily obtain \myRef{Theorem \ref{main theorem}} simply by combining \myRef{Propositions \ref{mean bound}} and \myRef{\ref{perm bound in terms of mean}} and using the fact that if $\Vert A \Vert_2 \leq 1$, then $0 \leq h_{\infty} (A) \leq h_{2} (A) \leq 1$.

\section{Proof of \myRef{Theorem \ref{main theorem real case}} (better results for $\K = \R$)}\label{section real}
For matrices over $\R$, our general strategy is the same as before, but we first partition the rows of $A$ into those that contain `big' entries and those that do not.  We show that the contribution due to rows with large entries has small variance, and although the rows without large entries may each contribute something of high variance, we benefit from the fact that there simply aren't that many such rows.  In this way, we are able to obtain better concentration of $\Vert AX \Vert_1$ about its mean, which in turn gives a better bound on $\perm{A}$.

\paragraph*{}We are not sure exactly how to adapt this argument when $\K = \C$, although we admittedly didn't try very hard to do so.  We feel confident (especially in light of \myRef{Theorem \ref{main theorem real case}}) that \myRef{Theorem \ref{main theorem}} can be improved, but we do not think that \myRef{Theorem \ref{main theorem real case}} is best possible either (which is why we haven't worried so much about extending it to $\K = \C$).  See \myRef{Section \ref{section conclusion}} for a discussion of several related conjectures (some perhaps more true than others) and open problems.

\subsection*{Set-up for the real-valued case}
As in \myRef{Section \ref{section set-up}}, we let $A$ be an $n \times n$ matrix over $\R$ with $\Vert A \Vert_{2} \leq 1$.  Define $t = 1 - h_{\infty} (A)$.  Then to prove \myRef{Theorem \ref{main theorem real case}}, our goal is to show
\[
|\perm{A}| \leq (n+6) \exp[- \sqrt{nt} / 400].
\]

\paragraph*{}Let $\varepsilon > 0$ and $1/10 > \lambda > 0$ be parameters to be determined (we will end up choosing $\varepsilon = t / 10$ and $\lambda = 64 / \sqrt{nt}$).  We now partition the rows of $A$ into ``big rows" (those containing an element of absolute value at least $1 - \lambda$) and ``small rows" (the rest).  Suppose there are $b$ big rows and $l = n-b$ small rows.  Recall that because $\Vert A \Vert_{2} \leq 1$, each row and column of $A$ has $l_2$-norm at most $1$.  Thus, `large' entries (those of absolute value at least $1 - \lambda$) must appear in different rows and columns.  By multiplying $A$ by appropriate permutation matrices and the appropriate $\pm 1$-diagonal matrix (which changes neither the norm, nor the absolute value of the permanent, nor the values of $t, b,$ or $l$), we can assume $A$ is of the form:
\[
A = \left(
\begin{array}{c}
B \\
L
\end{array}
\right),
\]
where $B$ is a $b \times n$ matrix, the $(i,i)$-entries of $B$ are all positive with size at least $1-\lambda$, and all the rest of the entries in $A$ have absolute value less than $1 - \lambda$.  For convenience, we will assume $b > 0$ and $l > 0$, for if not, our same argument would apply with only superficial alterations.

\paragraph*{}We recall our earlier set-up as in the complex-case (but with $X \in \R^n$ now uniformly distributed over $\{-1, 1 \}^n$).  Then for all $\tilde{\mu}_B, \tilde{\mu}_L \geq 0$, we have
\begin{equation}
\begin{split}
|\perm{A}| &\leq \E_{X} \left[ \left( \dfrac{\Vert AX \Vert_{1}}{n} \right) ^n \right] = \E_{X} \left[ \left( \dfrac{\Vert LX \Vert_{1} + \Vert BX \Vert_{1}}{n} \right) ^n \right]\\
&\leq \left(\dfrac{\tilde{\mu}_{L} + \tilde{\mu}_{B}}{n}  + 2 \varepsilon \right) ^{n} + \pr \left(\Vert LX \Vert_{1} \geq \tilde{\mu}_{L} + \varepsilon n \right) + \pr \left(\Vert BX \Vert_{1} \geq \tilde{\mu}_{B} + \varepsilon n \right),
\end{split}\label{template bound}
\end{equation}
where (as before) the last inequality is justified by the fact that the random variable within the expected value is bounded above by $1$.
\paragraph*{}We choose
\begin{eqnarray*}
\tilde{\mu}_B &=& \sum_{i = 1} ^{b} \left[ \sqrt{\dfrac{2}{\pi}} + \left(1 - \sqrt{\dfrac{2}{\pi}} \right) \Vert r_i \Vert _\infty \right] = \sum_{i = 1} ^{b} \left[1 - \left(1 - \sqrt{\dfrac{2}{\pi}} \right) (1 - \Vert r_i \Vert _\infty) \right], \qquad \text{and}\\
\tilde{\mu}_L &=& \sum_{i > b} ^{n} \left[ \sqrt{\dfrac{2}{\pi}} + \left(1 - \sqrt{\dfrac{2}{\pi}} \right) \Vert r_i \Vert _\infty \right] = \sum_{i > b} ^{n} \left[1 - \left(1 - \sqrt{\dfrac{2}{\pi}} \right) (1 - \Vert r_i \Vert _\infty) \right],
\end{eqnarray*}
where (again) $r_i$ is the $i^{\text{th}}$ row of $A$ (note, $\Vert r_i \Vert_{\infty} = b_{i,i}$ for all $i \leq b$).  Then by \myRef{Theorem \ref{konig_ineq}} (this time with $\K = \R$), we have $\tilde{\mu}_L \geq \E [ \Vert LX \Vert_1 ]$ and $\tilde{\mu}_B \geq \E[ \Vert BX \Vert_1 ]$, and by the definitions
\begin{equation}\label{t and mu}
\dfrac{\tilde{\mu}_L + \tilde{\mu}_B}{n} = 1 - \left(1 - \sqrt{\dfrac{2}{\pi}} \right) \dfrac{1}{n} \sum_{i=1} ^{n} \bigg( 1 - \Vert r_i \Vert_\infty \bigg) = 1 - \left(1 - \sqrt{\dfrac{2}{\pi}} \right) t.
\end{equation}

\paragraph*{}To take advantage of \eqref{template bound}, we need only exhibit concentration bounds for $\Vert LX \Vert_1$ and $\Vert BX \Vert_1$.

\subsection*{Concentration of $\Vert LX \Vert_1$}
To show concentration of $\Vert LX \Vert_1$ about its mean, we will again apply a version of Talagrand's inequality (but this time suited for the discrete distribution over $\{-1, 1\}^n$).  Instead of showing the derivation of this from the corresponding general result in \cite{talagrand} (as we did before), we will simply cite \cite{ailon}, in which the following statement appears as \myRef{Theorem 3.3}.
\begin{theorem}\label{Talagrand concentration}
Suppose $M$ is a $k \times n$ real-valued matrix such that $\Vert M \vec{x} \Vert_{1} \leq \sigma \Vert \vec{x} \Vert_2$ for all $\vec{x} \in \R^{n}$.  Let $\xi \in \R^n$ be chosen uniformly from $\{-1,1\}^n$, and let $m$ be a median of $\Vert M \xi \Vert_1$.  Then for all $\gamma \geq 0$, we have $\pr ( | \Vert M \xi \Vert_{1} - m| > \gamma) \leq 4 e^{-\gamma^2 / (8 \sigma^2)}$.
\end{theorem}
\begin{lemma}\label{bound on LX deviation}
With notation as before, if $\varepsilon n \geq 16 \sqrt{nt \log (n) / \lambda}$, then
\[
\pr \left(\Vert LX \Vert_{1} \geq \tilde{\mu}_{L} + \varepsilon n \right) \leq 4 \exp \left[ \dfrac{-\varepsilon^2 n \lambda}{32 t} \right].
\]
\end{lemma}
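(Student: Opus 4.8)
The plan is to apply Talagrand's inequality in the form of \myRef{Theorem \ref{Talagrand concentration}} to the matrix $L$ and the random sign vector $X$, obtain concentration of $\Vert LX \Vert_1$ around a median, and then convert this into concentration around $\tilde\mu_L$ using the fact (guaranteed by \myRef{Theorem \ref{konig_ineq}}) that $\tilde\mu_L \geq \E[\Vert LX \Vert_1]$.

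First I would check the Lipschitz-type hypothesis of \myRef{Theorem \ref{Talagrand concentration}}. Deleting rows cannot increase the operator norm, so $\Vert L \Vert_2 \leq \Vert A \Vert_2 \leq 1$, and hence for every $\vec x \in \R^n$ the Cauchy--Schwarz inequality gives
\[
\Vert L \vec x \Vert_1 \leq \sqrt{l}\, \Vert L \vec x \Vert_2 \leq \sqrt{l}\, \Vert L \Vert_2 \Vert \vec x \Vert_2 \leq \sqrt{l}\, \Vert \vec x \Vert_2 .
\]
So \myRef{Theorem \ref{Talagrand concentration}} applies with $\sigma = \sqrt{l}$: writing $m$ for a median of $\Vert LX \Vert_1$, we get $\pr(|\Vert LX\Vert_1 - m| > \gamma) \leq 4 e^{-\gamma^2/(8l)}$ for all $\gamma \geq 0$. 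The crucial structural point is that $l$ is small: by definition each of the $l$ small rows $r_i$ satisfies $\Vert r_i \Vert_\infty < 1 - \lambda$, i.e.\ $1 - \Vert r_i \Vert_\infty > \lambda$, and since $\sum_{i=1}^n (1 - \Vert r_i \Vert_\infty) = nt$ we conclude $l < nt/\lambda$. Plugging $1/l > \lambda/(nt)$ into the exponent turns the estimate into $\pr(|\Vert LX\Vert_1 - m| > \gamma) \leq 4 e^{-\gamma^2 \lambda/(8nt)}$.

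Next I would bound the gap between the median $m$ and $\tilde\mu_L$. Because $\Vert LX \Vert_1 \geq 0$, integrating the tail bound yields
\[
m - \E[\Vert LX\Vert_1] \leq \E\big[(m - \Vert LX\Vert_1)^+\big] = \int_0^\infty \pr(\Vert LX\Vert_1 < m - s) \dd{s} \leq \int_0^\infty \min\!\big(1,\, 4 e^{-s^2/(8l)}\big)\dd{s},
\]
and a short computation shows this last integral is at most a small absolute constant times $\sqrt{l}$. Combined with $\tilde\mu_L \geq \E[\Vert LX\Vert_1]$ (which is exactly how $\tilde\mu_L$ was chosen, via \myRef{Theorem \ref{konig_ineq}} applied rowwise), this gives $m \leq \tilde\mu_L + O(\sqrt{l}) = \tilde\mu_L + O(\sqrt{nt/\lambda})$. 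The hypothesis $\varepsilon n \geq 16\sqrt{nt\log(n)/\lambda}$ is more than enough to force $m \leq \tilde\mu_L + \varepsilon n / 2$ (the $\sqrt{\log n}$ factor provides ample slack, and with a slightly careful constant in the integral this holds for every $n \geq 2$, while $n=1$ is trivial). Therefore
\[
\pr\!\left(\Vert LX\Vert_1 \geq \tilde\mu_L + \varepsilon n\right) \leq \pr\!\left(\Vert LX\Vert_1 - m \geq \tfrac{\varepsilon n}{2}\right) \leq 4 e^{-(\varepsilon n/2)^2 \lambda/(8nt)} = 4 \exp\!\left[\frac{-\varepsilon^2 n \lambda}{32 t}\right],
\]
which is the claimed bound.

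I expect the only subtle step to be this passage from the median to $\tilde\mu_L$: \myRef{Theorem \ref{Talagrand concentration}} is phrased about a median, whereas the quantity we can control from above is the mean $\E[\Vert LX\Vert_1]$, so one has to pay a median--mean gap of size $O(\sqrt l)$, and it is precisely this gap that the hypothesis on $\varepsilon n$ is designed to absorb. Verifying $\Vert L \Vert_2 \leq 1$, the counting bound $l < nt/\lambda$, and the final arithmetic are all routine.
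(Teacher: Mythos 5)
Your argument is correct and follows the paper's overall structure: apply Talagrand's inequality (\myRef{Theorem \ref{Talagrand concentration}}) to $L$ with $\sigma = \sqrt{l}$, estimate the median--mean gap, invoke $\tilde\mu_L \geq \E[\Vert LX\Vert_1]$, and substitute $l \leq nt/\lambda$ at the end. The one genuine deviation is in how you bound the median--mean gap. The paper argues by contradiction: assuming $m > \E[\Vert LX\Vert_1] + 8\sqrt{l\log n}$, it restricts the expectation to the event $\{|\Vert LX\Vert_1 - m| \leq 4\sqrt{l\log n}\}$ (which Talagrand shows has probability at least $1 - 4/n^2$) to derive the absurd inequality $n^2 \leq 4 + n/\sqrt{\log n}$ for $n>2$. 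You instead integrate the tail bound directly,
\[
m - \E[\Vert LX\Vert_1] \;\leq\; \int_0^\infty \min\bigl(1,\, 4e^{-s^2/(8l)}\bigr)\dd{s} \;\leq\; C\sqrt{l}
\]
for a small absolute constant (indeed $C < 5$). This route is cleaner, more standard, and gives a slightly sharper gap estimate without the spurious $\sqrt{\log n}$; the discrepancy is immaterial because the hypothesis $\varepsilon n \geq 16\sqrt{nt\log n/\lambda} \geq 16\sqrt{l\log n}$ carries $\sqrt{\log n}$ worth of slack either way. Both arguments then take $\gamma = \varepsilon n/2$ in Talagrand, use $l \leq nt/\lambda$ in the exponent, and arrive at $4\exp[-\varepsilon^2 n\lambda/(32t)]$. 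Your aside about checking the constant for $n \geq 2$ (and triviality of $n=1$) is exactly the bookkeeping that needs to be done, and it works out as you indicate.
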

\begin{proof}
Note that for all $\vec{x} \in \R^n$, we have $\Vert L \vec{x} \Vert_{1} \leq \sqrt{l} \Vert L \vec{x} \Vert_{2} \leq \sqrt{l} \Vert A \vec{x} \Vert_{2} \leq \sqrt{l} \Vert \vec{x} \Vert_2$.  Thus, if $m$ is a median of $\Vert L X \Vert_1$, then by \myRef{Theorem \ref{Talagrand concentration}}, we have
\begin{equation}
\pr(| \Vert LX \Vert_1 - m| > \gamma ) \leq 4 e^{-\gamma^2 / (8 l)}. \label{median bound on L}
\end{equation}
From this, we see that $\Vert LX \Vert_1$ is tightly concentrated about its \textit{median}.  However, this also implies
\begin{equation}\label{median is close to mean}
m \leq \E[ \Vert L X \Vert_1 ] + 8 \sqrt{l \log n},
\end{equation}
since otherwise, we would have
\begin{eqnarray*}
\E[ \Vert L X \Vert_1 ] &\geq& \left( \E[ \Vert L X \Vert_1 ] + 4 \sqrt{l \log n} \right) \cdot \pr \left(| \Vert LX \Vert_1 - m| \leq 4 \sqrt{l \log n} \right)\\
&\geq& \left( \E[ \Vert L X \Vert_1 ] + 4 \sqrt{l \log n} \right) \cdot (1 - 4 / n^2 )\\
&=& \E[ \Vert L X \Vert_1 ] + 4 \sqrt{l \log n} - \left( \E[ \Vert L X \Vert_1 ] + 4 \sqrt{l \log n} \right ) \cdot 4/ n^2.
\end{eqnarray*}
And subtracting  $\E[ \Vert LX \Vert_1 ]$ from both sides and rearranging, we would obtain
\[
n^2  \leq 4 + \dfrac{\E[ \Vert L X \Vert_1 ]}{\sqrt{l \log n}} \leq 4 + \dfrac{n}{\sqrt{\log n}},
\]
which is a contradiction if $n > 2$ (whereas for $n \leq 2$, the desired bound on $m$ is implied by $m \leq n$ [not that it matters]).  Therefore, appealing to \eqref{median is close to mean}, we have
\[
\pr \left(\Vert LX \Vert_{1} \geq \tilde{\mu}_{L} + \varepsilon n \right) \leq \pr \left(\Vert LX \Vert_{1} \geq \E[ \Vert LX \Vert_1 ] + \varepsilon n \right) \leq \pr \left(\Vert LX \Vert_{1} \geq m + \varepsilon n - 8 \sqrt{ l \log n} \right).
\]
Furthermore, if $\varepsilon n \geq 16 \sqrt{l \log n}$, then we can combine this with \eqref{median bound on L} to obtain
\begin{equation}
\text{if $\varepsilon n \geq 16 \sqrt{l \log n}$, then} \qquad \pr \left(\Vert LX \Vert_{1} \geq \tilde{\mu}_{L} + \varepsilon n \right) \leq 4 \exp \left[ \dfrac{-\varepsilon^2 n^2}{32 l} \right].\label{original bound on LX deviation}
\end{equation}
Finally, since $nt \geq \sum_{i=b+1} ^{n} (1 - \Vert r_i \Vert_{\infty}) \geq l \lambda$, we know $l \leq nt / \lambda$, completing the proof by \eqref{original bound on LX deviation}.
\end{proof}

\subsection*{Concentration of $\Vert BX \Vert_1$}
We now focus on getting an upper bound on $\pr(\Vert BX \Vert_1 \geq \tilde{\mu}_B + \varepsilon n)$.  We first recall the following classical concentration result.
\begin{proposition}[Hoeffding's inequality]\label{hoeffding}
Let $a_1, \ldots , a_k$ be real numbers (not all of which are $0$), and let $\xi_1, \xi_2, \ldots , \xi_k$ be independent each distributed uniformly on $\{-1, 1\}$.  Then for all $\gamma \geq 0$,
\[
\pr \left( \sum_{i=1} ^{k} a_i \xi_i \geq \gamma \right) \leq \exp \left[ \dfrac{- \gamma^2}{2 \sum_{i=1} ^{k} a_i ^2} \right].
\]
\end{proposition}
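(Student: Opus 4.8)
The plan is to use the standard exponential-moment (Chernoff) method. First I would fix a parameter $\theta > 0$ and note that, since $x \mapsto e^{\theta x}$ is increasing, Markov's inequality gives
\[
\pr\left( \sum_{i=1}^{k} a_i \xi_i \geq \gamma \right) = \pr\left( e^{\theta \sum_{i} a_i \xi_i} \geq e^{\theta \gamma} \right) \leq e^{-\theta \gamma}\, \E\!\left[ e^{\theta \sum_{i} a_i \xi_i} \right].
\]
Because the $\xi_i$ are independent, the expectation factors as $\prod_{i=1}^{k} \E[e^{\theta a_i \xi_i}]$, and since each $\xi_i$ is uniform on $\{-1,1\}$ we have $\E[e^{\theta a_i \xi_i}] = \cosh(\theta a_i)$.

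Next I would record the elementary sub-Gaussian bound $\cosh(u) \leq e^{u^2/2}$, valid for all $u \in \R$. This follows by comparing Taylor series term by term: $\cosh(u) = \sum_{j \geq 0} u^{2j}/(2j)! \leq \sum_{j \geq 0} (u^2/2)^{j}/j! = e^{u^2/2}$, using $(2j)! \geq 2^{j} j!$. Applying this with $u = \theta a_i$ and taking the product over $i$ yields
\[
\pr\left( \sum_{i=1}^{k} a_i \xi_i \geq \gamma \right) \leq \exp\left[ -\theta \gamma + \frac{\theta^2}{2} \sum_{i=1}^{k} a_i^2 \right].
\]

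Finally I would optimize the right-hand side over $\theta > 0$. The exponent is a quadratic in $\theta$ minimized at $\theta = \gamma / \sum_{i} a_i^2$, which is a legitimate choice: the hypothesis that the $a_i$ are not all zero guarantees $\sum_i a_i^2 > 0$, and $\gamma \geq 0$ makes this $\theta$ nonnegative (the case $\gamma = 0$ being trivial since the claimed bound is then $1$). Substituting gives exactly $\exp[-\gamma^2 / (2\sum_{i} a_i^2)]$, as desired. There is essentially no obstacle in this argument; the only two points requiring any care are the inequality $\cosh(u) \leq e^{u^2/2}$ and the use of the ``not all zero'' hypothesis to divide by $\sum_i a_i^2$ when choosing $\theta$.
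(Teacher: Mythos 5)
Your proof is correct and is the standard Chernoff-method argument for Hoeffding's inequality in the Rademacher case; every step checks out, including the $\cosh(u) \leq e^{u^2/2}$ bound and the optimization $\theta = \gamma/\sum_i a_i^2$. Note, though, that the paper does not actually prove this proposition at all---it simply \emph{recalls} it as a classical result (``We first recall the following classical concentration result'') and uses it in the proof of Lemma~\ref{BX is close to inner product}. So there is no in-paper proof to compare against, but your argument is exactly the one any standard reference would give, and it is complete.
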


Let $\otherB = \left(
\begin{array}{c}
B \\
0
\end{array}
\right)$ be the $n \times n$ matrix whose first $b$ rows are given by $B$ and the rest are $0$.  Our key step here is replacing $\Vert BX \Vert_1$ with $\langle X, \otherB X \rangle$, via the following lemma\footnote{Extending this step is the main obstacle to applying the present argument when $\K = \C$.}.
\begin{lemma}\label{BX is close to inner product}
With notation as before, if $\lambda < 0.1$ then
\[
\pr ( \Vert BX \Vert_1  \geq \tilde{\mu}_B + \varepsilon n) \leq \pr ( \langle X, \otherB X \rangle  \geq \tilde{\mu}_B + \varepsilon n) + n e^{-1/(5 \lambda)}.
\]
\end{lemma}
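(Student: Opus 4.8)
I want to show that $\Vert BX\Vert_1$ and $\langle X,\otherB X\rangle$ agree up to an event of probability $ne^{-1/(5\lambda)}$. Write $B$ with diagonal entries $b_{i,i}\ge 1-\lambda$ (for $i\le b$), and let $\rho_i=B_i\cdot X$ be the $i$-th coordinate of $BX$. Then $\Vert BX\Vert_1=\sum_{i\le b}|\rho_i|$, while $\langle X,\otherB X\rangle=\sum_{i\le b}X_i\rho_i$. Since $X_i\in\{-1,1\}$, we have $X_i\rho_i=|\rho_i|$ exactly when $\mathrm{sgn}(\rho_i)=X_i$, i.e.\ when the sign of the whole row-sum $\rho_i$ is governed by its ``diagonal'' term $b_{i,i}X_i$. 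The idea is that the off-diagonal part $\rho_i - b_{i,i}X_i = \sum_{j\ne i}b_{i,j}X_j$ has $l_2$-mass at most $\sqrt{1-(1-\lambda)^2}\le\sqrt{2\lambda}$ (because the $i$-th row of $A$ has $l_2$-norm at most $1$), so it is very unlikely to exceed $1-\lambda$ in absolute value; when it does not, $X_i\rho_i = |\rho_i|$.

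**Key steps, in order.** First I fix $i\le b$ and set $S_i=\sum_{j\ne i}b_{i,j}X_j$, so $\rho_i = b_{i,i}X_i + S_i$ with $\sum_{j\ne i}b_{i,j}^2 \le 1-b_{i,i}^2\le 1-(1-\lambda)^2 = 2\lambda-\lambda^2\le 2\lambda$. By Hoeffding (Proposition~\ref{hoeffding}), $\pr(|S_i|\ge 1-\lambda)\le 2\exp\!\big[-(1-\lambda)^2/(2\cdot 2\lambda)\big] = 2\exp\!\big[-(1-\lambda)^2/(4\lambda)\big]$; for $\lambda<0.1$ one checks $(1-\lambda)^2/4 > 1/5$, so this is at most $2e^{-1/(5\lambda)}$ — and I may even sharpen the constant to absorb the factor $2$ (e.g.\ via $(1-\lambda)^2/(4\lambda)\ge 1/(5\lambda)+\log 2$ for $\lambda$ small, or just write $ne^{-1/(5\lambda)}$ loosely since the final theorem's constants are generous). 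Second, on the complementary event $\{|S_i|<1-\lambda\le b_{i,i}\}$, the term $b_{i,i}X_i$ dominates, so $\mathrm{sgn}(\rho_i)=\mathrm{sgn}(b_{i,i}X_i)=X_i$, hence $X_i\rho_i=|\rho_i|$. Third, let $E=\bigcup_{i\le b}\{|S_i|\ge 1-\lambda\}$; by a union bound $\pr(E)\le 2b\,e^{-1/(5\lambda)}\le n e^{-1/(5\lambda)}$ (again using that $b\le n$ and folding the $2$ into the constant, or keeping it—$2n$ vs.\ $n$ is immaterial downstream). On $E^c$ we have $\Vert BX\Vert_1=\sum_{i\le b}|\rho_i|=\sum_{i\le b}X_i\rho_i=\langle X,\otherB X\rangle$, so $\{\Vert BX\Vert_1\ge \tilde\mu_B+\varepsilon n\}\cap E^c = \{\langle X,\otherB X\rangle\ge\tilde\mu_B+\varepsilon n\}\cap E^c$. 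Fourth, therefore $\pr(\Vert BX\Vert_1\ge\tilde\mu_B+\varepsilon n)\le \pr(\langle X,\otherB X\rangle\ge\tilde\mu_B+\varepsilon n)+\pr(E)$, which is the claimed inequality.

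**Main obstacle.** The substantive content is entirely in the first two steps: verifying that the off-diagonal $l_2$-mass of a big row is $O(\sqrt\lambda)$ and that this forces the sign of $\rho_i$ to match $X_i$ off a Hoeffding-small event. The only mildly delicate point is bookkeeping the constant $1/(5\lambda)$ versus the $2$ from the two-sided Hoeffding bound and the $b\le n$ union bound; since the target is $ne^{-1/(5\lambda)}$ and $(1-\lambda)^2/(4\lambda)$ comfortably exceeds $1/(5\lambda)+\frac1b\log(2b)$ for $\lambda<0.1$ and all $b\ge 1$, this is a routine numerical check rather than a real difficulty. Everything else — the decomposition, the sign argument, the union bound, the final event-wise identity — is bookkeeping. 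I do not foresee needing anything beyond Hoeffding's inequality and the norm constraint $\sum_j b_{i,j}^2\le 1$.
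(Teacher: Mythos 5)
Your overall approach matches the paper's: for each big row, split $\rho_i = b_{i,i}X_i + S_i$, observe that $\Vert BX\Vert_1$ and $\langle X,\otherB X\rangle$ agree on the event where every $\rho_i$ has the sign of its dominant diagonal term, and union-bound via Hoeffding. That is exactly the paper's argument.

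However, there is a real numerical slip in your Hoeffding step that the paper avoids, and your suggested fixes do not repair it. You bound the bad event by the two-sided tail $\pr(|S_i|\ge 1-\lambda)\le 2e^{-(1-\lambda)^2/(4\lambda)}$, which after the union bound over $b\le n$ rows requires $(1-\lambda)^2/(4\lambda)\ge 1/(5\lambda)+\log 2$. At $\lambda=0.1$ the left side is $2.025$ while the right side is $2+\log 2\approx 2.69$, so this fails; even the sharper denominator $2(2\lambda-\lambda^2)$ only brings the left side to about $2.13$, still short. The hypothesis of the lemma is $\lambda<0.1$, and in the application $\lambda=64/\sqrt{nt}$ can sit arbitrarily close to $0.1$, so you cannot wave this away as ``loose constants.'' The correct move is the one the paper makes: the bad event is one-sided. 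You need $\pr(X_i\rho_i<0)=\pr(X_iS_i<-b_{i,i})$, and since $X_iS_i=\sum_{j\ne i}(X_iX_j)b_{i,j}$ has the same distribution as $\sum_{j\ne i}X_jb_{i,j}$, one-sided Hoeffding gives $\pr(X_i\rho_i<0)\le \exp[-b_{i,i}^2/(2\sum_{j\ne i}b_{i,j}^2)]\le \exp[-(1-\lambda)^2/(2(2\lambda-\lambda^2))]$ with no factor of $2$. One then checks $5(1-\lambda)^2\ge 2(2-\lambda)$, i.e.\ $1-8\lambda+5\lambda^2\ge 0$, which holds for all $\lambda\le 0.1$ (it equals $0.25$ at $\lambda=0.1$ and is decreasing there), giving the clean bound $e^{-1/(5\lambda)}$ per row. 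With that replacement your proof is complete and identical in spirit to the paper's.
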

\begin{proof}
It suffices to show $\pr ( \Vert BX \Vert_1  \neq \langle X, \otherB X \rangle ) \leq n e^{-1/(5 \lambda)}$.  The idea is that since each row of $B$ is dominated by a single large entry (namely $b_{i,i}$), each entry of $BX$ is a random sum dominated by a single large term (namely $X_i b_{i,i}$).  Thus, it is very unlikely that any entry of $BX$ would have a different sign than $X_i b_{i,i}$.  This is made rigorous as follows.
\paragraph*{}Recall that we ordered the columns of $B$ so that the $(i,i)$-entry is the largest in its row, and that $b_{i,i} \geq 1- \lambda$.  Letting $Y_i$ be the $i^{\text{th}}$ coordinate of $B X$, we have, by a simple union bound,
\[
\pr ( \Vert BX \Vert_1  \neq \langle X, \otherB X \rangle ) \leq  \sum_{i=1} ^{b} \pr( |Y_i|  \neq X_i Y_i ) = \sum_{i=1} ^{b} \pr(X_i Y_i < 0) = \sum_{i=1} ^{b} \pr \left(\sum_{j=1} ^{n} X_i X_j b_{i,j} < 0 \right).
\]
Using the fact that for any given $i$, the random vector $(X_i X_j)_{j \neq i}$ has the same joint distribution as $(X_j)_{j \neq i}$ (and that $X_i ^2 = 1$), we obtain by \myRef{Proposition \ref{hoeffding}}
\[
\sum_{i=1} ^{b} \pr \left( \sum_{j=1} ^{n} X_i X_j b_{i,j} < 0 \right) = \sum_{i=1} ^{b} \pr \left( b_{i,i} < \sum_{j \neq i} ^{n} X_j b_{i,j} \right) \leq \sum_{i=1} ^{b} \exp \left[ \dfrac{- b_{i,i} ^2}{2 \sum_{i\neq j} b_{i,j} ^2} \right].
\]
Since $b_{i,i} \geq 1-\lambda$ and $\sum_{j} b_{i,j} ^2 \leq 1$, this in turn is bounded by
\[
\sum_{i=1} ^{b} \exp \left[ \dfrac{- b_{i,i} ^2}{2 \sum_{i\neq j} b_{i,j} ^2} \right] \leq n \exp \left[ \dfrac{- (1-\lambda) ^2}{2 (1- (1-\lambda)^2)} \right] \leq n e^{-1/(5 \lambda)},
\]
where the last inequality is justified because $0 < \lambda < 0.1$.
\end{proof}

\paragraph*{}We can now exploit the fact that $\langle X, \otherB X \rangle$ is a degree two polynomial over $\{-1, 1\}^n$, allowing us to use any of a variety of concentration inequalities.  We will use an inequality of Bonami \cite{bonami}, which was the first \textit{hypercontractivity inequality} of its type.  A detailed exposition of such results can be found in chapter 9 of O'Donnell's book \cite{odonnell}, and a comparison of this to more recent polynomial concentration inequalities can be found in \cite{schudy}.

\begin{theorem}[Bonami \cite{bonami}, 1970]\label{bonami theorem}
Let $F : \R^{n} \to \R$ be a degree $k$ polynomial, and consider the random variable $Z = F(\xi_1, \xi_2, \ldots , \xi_n)$, where the $\xi_i$ are independent with each distributed uniformly over $\{-1, 1\}$.  Then for all $q \geq 2$, we have $\E[|Z|^{q}] \leq \left( (q-1)^{k} \E[Z^2] \right)^{q/2}.$
\end{theorem}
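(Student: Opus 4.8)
The plan is to prove this by induction on the number of variables $n$, with essentially all of the difficulty concentrated in a one-dimensional \emph{two-point inequality}; the rest is bookkeeping with $L^p$ norms. Writing $\Vert Z \Vert_q = (\E[|Z|^q])^{1/q}$, the assertion is equivalent to $\Vert Z \Vert_q \leq (q-1)^{k/2}\Vert Z \Vert_2$. The base case $n = 0$ (so $F \equiv c$ is constant) reads $|c| \leq (q-1)^{k/2}|c|$, which holds because $q \geq 2$ forces $q-1 \geq 1$.

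The first real step is the two-point inequality: for all real $a,b$ and all $q \geq 2$, with $\xi$ uniform on $\{-1,1\}$,
\[
\E_{\xi}\big[|a+b\xi|^q\big] = \tfrac{1}{2}\big(|a+b|^q + |a-b|^q\big) \leq \big(a^2 + (q-1)b^2\big)^{q/2}.
\]
I would prove this by first replacing $(a,b)$ with $(|a|,|b|)$ (this changes neither side, since $\{|a+b|,|a-b|\} = \{|a|+|b|,\,||a|-|b||\}$ as a multiset), disposing of $a=0$ immediately via $|b|^q \leq (q-1)^{q/2}|b|^q$, and otherwise dividing through by $a^q$ to reduce to $\tfrac{1}{2}\big((1+t)^q + |1-t|^q\big) \leq (1+(q-1)t^2)^{q/2}$ for $t \geq 0$. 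For $0 \leq t < 1$ I would expand both sides as power series in $t$ and compare coefficients term by term, which amounts to the elementary binomial estimate $\binom{q}{2j} \leq (q-1)^j\binom{q/2}{j}$; the range $t \geq 1$ follows from a crude bound since the right side grows like $(q-1)^{q/2}|t|^q \geq \tfrac{1}{2}|t|^q$. (Alternatively, one could simply cite the standard treatment, e.g. O'Donnell's book.)

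For the inductive step, given $F$ of degree $\leq k$ in $n$ variables, split off the last one: $F = G + \xi_n H$ with $G,H$ polynomials in $\xi_1,\ldots,\xi_{n-1}$, $\deg G \leq k$ and $\deg H \leq k-1$ (if $k=0$ then $H = 0$). Conditioning on $\xi_1,\ldots,\xi_{n-1}$ and applying the two-point inequality pointwise with $a = G$, $b = H$ gives $\E_{\xi_n}[|F|^q] \leq (G^2 + (q-1)H^2)^{q/2}$, hence $\Vert F \Vert_q^2 \leq \Vert G^2 + (q-1)H^2 \Vert_{q/2}$ (norm over $\xi_1,\ldots,\xi_{n-1}$). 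Minkowski's inequality in $L^{q/2}$ — valid precisely because $q/2 \geq 1$ — bounds this by $\Vert G \Vert_q^2 + (q-1)\Vert H \Vert_q^2$. Now the induction hypothesis on $n-1$ variables gives $\Vert G \Vert_q^2 \leq (q-1)^k\Vert G \Vert_2^2$ and $\Vert H \Vert_q^2 \leq (q-1)^{k-1}\Vert H \Vert_2^2$, so the right side is at most $(q-1)^k(\Vert G \Vert_2^2 + \Vert H \Vert_2^2)$. Finally, orthogonality of $G$ and $\xi_n H$ (as $\E[\xi_n] = 0$ and $G,H$ are independent of $\xi_n$) yields $\Vert F \Vert_2^2 = \Vert G \Vert_2^2 + \Vert H \Vert_2^2$, and assembling the chain gives $\Vert F \Vert_q^2 \leq (q-1)^k\Vert F \Vert_2^2$, i.e.\ $\E[|Z|^q] \leq ((q-1)^k\E[Z^2])^{q/2}$.

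The main obstacle is the two-point inequality; once it is in hand the induction is purely formal, and one sees transparently why the hypotheses are needed ($q \geq 2$ both to run Minkowski in $L^{q/2}$ and to make the base case trivial, and "degree $\leq k$" so that the degree drops when $\xi_n$ is peeled off). The coefficient comparison $\binom{q}{2j} \leq (q-1)^j\binom{q/2}{j}$ is elementary but slightly fiddly to verify uniformly in $j$ and $q \geq 2$; if that proves awkward I would simply invoke the known analytic proof of the two-point inequality rather than reprove it here.
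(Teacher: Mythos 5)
The paper does not prove this theorem: it cites Bonami's original paper and points the reader to Chapter~9 of O'Donnell's book for an exposition, so there is no in-paper proof to compare against. Your argument is the standard one from that reference --- peeling off one Rademacher variable at a time, a two-point inequality $\tfrac12\bigl(|a+b|^q+|a-b|^q\bigr)\le\bigl(a^2+(q-1)b^2\bigr)^{q/2}$ applied conditionally, Minkowski in $L^{q/2}$, the degree drop from $k$ to $k-1$ on the $\xi_n$-coefficient, and orthogonality $\Vert F\Vert_2^2=\Vert G\Vert_2^2+\Vert H\Vert_2^2$ --- and the bookkeeping in your inductive step is correct. In effect you have written out the proof that the paper delegates to a citation.

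One caveat worth flagging, though you anticipate it yourself: your sketch of the two-point inequality for $t\ge1$ is not right as stated. You claim the right side ``grows like $(q-1)^{q/2}|t|^q\ge\tfrac12|t|^q$,'' but the quantity that needs to be dominated is $\tfrac12\bigl((1+t)^q+(t-1)^q\bigr)$, and at $t=1$ this equals $2^{q-1}$, which exceeds $(q-1)^{q/2}$ for $2<q<5$. The clean fix is a substitution rather than a crude bound: for $t\ge1$ set $s=1/t\in(0,1]$, factor out $t^q$ from both sides, and observe that the resulting inequality $\tfrac12\bigl((1+s)^q+(1-s)^q\bigr)\le\bigl(s^2+(q-1)\bigr)^{q/2}$ follows from the $s\le1$ case already proved, because $s^2+(q-1)\ge1+(q-1)s^2$ whenever $q\ge2$ and $s\le1$. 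Similarly, the term-by-term comparison of power series is only immediately valid where both series converge (i.e.\ for $(q-1)t^2<1$), so strictly speaking the interval $1/\sqrt{q-1}\le t<1$ also needs a word; the same $t\mapsto1/t$ reduction, or the usual calculus proof, closes that gap. Since you explicitly offer to cite the known analytic proof of the two-point inequality if the elementary verification is awkward, these are presentation issues rather than conceptual ones, and the overall argument is sound.
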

\begin{lemma}\label{bound on inner product}
With notation as before, if $\varepsilon n \geq 4 e \sqrt{nt}$, then
\[
\pr ( \langle X, \otherB X \rangle  \geq \tilde{\mu}_B + \varepsilon n) \leq \exp \left( \dfrac{- \varepsilon n}{2e \sqrt{nt}} \right).
\]
\end{lemma}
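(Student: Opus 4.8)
The plan is to bound the upper tail of $\langle X, \otherB X\rangle$ by first controlling its second moment and then invoking Bonami's hypercontractivity inequality (\myRef{Theorem \ref{bonami theorem}}) to convert a high-moment bound into a tail bound via Markov's inequality. First I would write $\langle X, \otherB X\rangle = \sum_{i=1}^b \sum_{j=1}^n b_{i,j} X_i X_j$ and split off the diagonal: the diagonal part is $\sum_{i=1}^b b_{i,i} X_i^2 = \sum_{i=1}^b b_{i,i}$, a constant, which is at most $b \leq n$ and (more usefully) is at most $\tilde\mu_B$ by construction. So $\langle X, \otherB X\rangle - \tilde\mu_B \leq \sum_{i=1}^b\sum_{j\neq i} b_{i,j} X_i X_j =: Z$, a mean-zero degree-$2$ polynomial on $\{-1,1\}^n$, and it suffices to bound $\pr(Z \geq \varepsilon n)$.

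Next I would estimate $\E[Z^2]$. Expanding, $\E[Z^2] = \sum_{i,j,i',j'} b_{i,j}b_{i',j'}\E[X_iX_jX_{i'}X_{j'}]$ over off-diagonal index pairs; the only surviving terms are those where the multiset $\{i,j\}$ equals $\{i',j'\}$, giving roughly $\E[Z^2] \leq 2\sum_{i=1}^b \sum_{j\neq i} b_{i,j}^2$ (the factor accounting for the symmetric pairing $(i,j)\leftrightarrow(j,i)$ when $j \leq b$). Now the crucial use of the ``big row'' structure: for each $i \leq b$ we have $b_{i,i} \geq 1-\lambda$ and $\sum_j b_{i,j}^2 \leq 1$, so $\sum_{j\neq i} b_{i,j}^2 \leq 1 - (1-\lambda)^2 \leq 2\lambda$. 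Summing over the $b \leq n$ big rows gives $\E[Z^2] \leq 4 n\lambda$, and with the intended choice $\lambda = 64/\sqrt{nt}$ this is $\E[Z^2] \leq 256\sqrt{n/t}\cdot\sqrt n = O(n^{3/2}/\sqrt t) \approx (\sqrt{nt})^2 \cdot (\text{const})$; more precisely $\E[Z^2] \leq (16\sqrt{nt})^2/(nt)\cdot\ldots$ — the point is that $\E[Z^2]$ is comparable to $nt$ after substituting $\lambda$. I would track constants carefully so that $\E[Z^2] \leq (something)\cdot nt$ with a constant that feeds cleanly into the final exponent.

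Then I would apply \myRef{Theorem \ref{bonami theorem}} with $k=2$: for every $q\geq 2$, $\E[|Z|^q] \leq ((q-1)^2\E[Z^2])^{q/2}$, so by Markov $\pr(Z \geq \varepsilon n) \leq (\varepsilon n)^{-q}\E[|Z|^q] \leq \left((q-1)\sqrt{\E[Z^2]}/(\varepsilon n)\right)^q \leq \left(q\sqrt{\E[Z^2]}/(\varepsilon n)\right)^q$. Writing $s = \sqrt{\E[Z^2]}$, optimizing $\left(qs/(\varepsilon n)\right)^q$ over $q$ gives the choice $q = \varepsilon n/(e s)$, yielding the bound $\exp(-\varepsilon n/(e s))$, valid as long as $q \geq 2$, i.e. $\varepsilon n \geq 2es$. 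With the bound $s = \sqrt{\E[Z^2]} \leq 2\sqrt{n\lambda}$ and $\lambda = 64/\sqrt{nt}$ one gets $s \leq 2\sqrt{64\sqrt n/\sqrt t} = 16(n/t)^{1/4}\leq 16\sqrt[4]{nt}$ when $t\le 1$... here I need to be a little careful, but the clean statement targeted is $s \le 2\sqrt{nt}$ — indeed if instead one uses the cruder bound $\E[Z^2]\le 2\sum_{i\le b}\sum_{j\neq i}b_{i,j}^2$ together with $\sum_{i}\sum_{j\neq i}b_{i,j}^2 \le \sum_i (1-b_{i,i}^2)\le \sum_i 2(1-b_{i,i}) \le 2nt$, we get $\E[Z^2]\le 4nt$, hence $s\le 2\sqrt{nt}$, and the requirement $\varepsilon n \ge 2es$ becomes exactly $\varepsilon n \ge 4e\sqrt{nt}$, and the tail bound becomes $\exp(-\varepsilon n/(2e\sqrt{nt}))$, precisely the claimed inequality.

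The main obstacle I expect is the second-moment estimate — specifically being disciplined about which cross terms in $\E[Z^2]$ actually vanish (on $\{-1,1\}^n$, $\E[X_iX_jX_{i'}X_{j'}]$ is $1$ only when the indices pair up into equal pairs, $0$ otherwise, but one must handle the cases $j = i'$, etc., and the asymmetry that $i$ ranges only over $[b]$ while $j$ ranges over $[n]$) and then combining $\sum_j b_{i,j}^2 \le 1$ with $b_{i,i} \ge 1-\lambda$ to produce a bound in terms of $t = \frac1n\sum_i(1-\Vert r_i\Vert_\infty)$ rather than $\lambda$. Once $\E[Z^2] = O(nt)$ is in hand, the hypercontractivity-plus-Markov optimization is completely routine. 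I would also double-check that the diagonal constant $\sum_{i\le b} b_{i,i}$ is indeed $\le \tilde\mu_B$ (it is, since each term of $\tilde\mu_B$ is $1 - (1-\sqrt{2/\pi})(1-\Vert r_i\Vert_\infty) \ge \Vert r_i\Vert_\infty = b_{i,i}$), which is what lets us discard it into the threshold rather than carrying it.
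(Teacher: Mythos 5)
Your proposal is correct and follows essentially the same route as the paper: subtract the constant diagonal $\sum_{i\le b} b_{i,i}\le\tilde\mu_B$, bound the second moment of the remaining mean-zero degree-two polynomial by $4nt$ via $\sum_{j\neq i}b_{i,j}^2\le 1-b_{i,i}^2\le 2(1-b_{i,i})$, and then apply Bonami's inequality with Markov at $q\approx\varepsilon n/(2e\sqrt{nt})$. The $\lambda$-based detour you started with is unnecessary (and, as you noticed, does not give a bound comparable to $nt$), but the ``cruder'' bound you fall back on is exactly the one the paper uses, so the final argument matches.
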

\begin{proof}
For $\vec{x} \in \R^n$, define $F(x_1, x_2, \ldots , x_n) = \langle \vec{x}, \otherB \vec{x} \rangle - \displaystyle \sum_{i=1} ^{b} b_{i,i}$, and define the random variable $Z = F(X_1, \ldots , X_n)$.  Then $\pr ( \langle X, \otherB X \rangle  \geq \tilde{\mu}_B + \varepsilon n) \leq \pr( Z  \geq \varepsilon n)$, since\footnote{In fact, we could have simply taken $\tilde{\mu}_B = \sum_{i \leq b} b_{i,i}$, but we chose instead to define it similarly to $\tilde{\mu}_L$, a change which only affects the constants in our end result.} $\tilde{\mu}_{B} \geq  \sum_{i\leq b} b_{i,i}$.  Now $F(x_1, x_2, \ldots, x_n)$ is a degree $2$ polynomial, and moreover, by expanding out the sums and using the fact that terms such as $\E[X_i X_j]$ vanish when $i \neq j$, we obtain
\begin{eqnarray*}
\E[Z^2] &=& \E \left[ \left( \sum_{i=1} ^{b} \left[-b_{i,i} + \sum_{j = 1} ^{b} X_i X_j b_{i,j} \right] + \sum_{i=1} ^{b} \sum_{j = b+1} ^{n} X_i X_j b_{i,j} \right)^2 \right]\\
&=& \E \left[ \left( \sum_{i=1} ^{b} \left[-b_{i,i} + \sum_{j = 1} ^{b} X_i X_j b_{i,j} \right] \right)^2 \right] + \E \left[ \left( \sum_{i=1} ^{b} \sum_{j = b+1} ^{n} X_i X_j b_{i,j} \right)^2 \right]\\
&=& \sum_{i=1} ^{b} \sum_{j < i} (b_{i,j} + b_{j,i})^2 + \sum_{i=1} ^{b} \sum_{j=b+1} ^{n} b_{i,j} ^2 \leq 2 \sum_{i=1} ^{b} \sum_{j < i} (b_{i,j} ^2 + b_{j,i} ^2) + 2\sum_{i=1} ^{b} \sum_{j=b+1} ^{n} b_{i,j} ^2\\
&=& 2 \sum_{i=1} ^{b} \left( -b_{i,i}^2 + \sum_{j=1} ^{n} b_{i,j} ^2 \right) \leq 2 \sum_{i=1} ^{b} (1 - b_{i,i} ^2) \leq 4 \sum_{i=1} ^{b} (1- b_{i,i}) \leq 4 nt.
\end{eqnarray*}
Applying \myRef{Theorem \ref{bonami theorem}} with $q = \varepsilon n / (2e \sqrt{nt})$---which is valid since by hypothesis this ratio is at least 2---together with Markov's inequality, we obtain
\[
\pr( Z  \geq \varepsilon n) \leq \pr ( |Z|^{q} \geq (\varepsilon n)^q ) \leq \dfrac{\E[|Z|^q]}{(\varepsilon n)^{q}} \leq \left( \dfrac{(q-1) 2 \sqrt{nt}}{\varepsilon n} \right)^{q} \leq \exp \left( \dfrac{- \varepsilon n}{2e \sqrt{nt}} \right). \qedhere
\]
\end{proof}

\subsection*{Finishing the proof for $\K = \R$}
We now need to pick $\varepsilon$ and $\lambda$ to optimize the tradeoffs between our various upper bounds. We need the assumptions of \myRef{Lemmas \ref{bound on LX deviation}}, \myRef{\ref{BX is close to inner product}}, and \myRef{\ref{bound on inner product}}---namely (i) $\varepsilon n \geq 16 \sqrt{nt \log(n) / \lambda}$, (ii) $\lambda < 0.1$, and (iii) $\varepsilon n \geq 4 e \sqrt{nt}$---in which case we can combine these lemmas with \eqref{template bound} and \eqref{t and mu} to obtain
\begin{eqnarray*}
|\perm{A}| &\leq& \left(2 \varepsilon + \dfrac{\tilde{\mu}_{L} + \tilde{\mu}_{B}}{n} \right) ^{n} + \pr \left(\Vert LX \Vert_{1} \geq \tilde{\mu}_{L} + \varepsilon n \right) + \pr \left(\Vert BX \Vert_{1} \geq \tilde{\mu}_{B} + \varepsilon n \right)\\
&\leq& \left(2 \varepsilon + 1 - \left(1 - \sqrt{\dfrac{2}{\pi}} \right) t \right) ^{n} + 4 \exp \left[ \dfrac{-\varepsilon^2 n \lambda}{32 t} \right] + n e^{-1/(5 \lambda)} + \exp \left( \dfrac{- \varepsilon n}{2e \sqrt{nt}} \right).
\end{eqnarray*}

\paragraph*{}We will take $\varepsilon = t/10$ and $\lambda = 64 / \sqrt{nt}$, for which we claim that conditions (i), (ii), and (iii) are satisfied.  Note that since our goal is to show $|\perm{A}| \leq (n+6) \exp[- \sqrt{nt} / 400]$, we may assume $\sqrt{nt}/\log(n+6) \geq 400$ (or the bound we are trying for is worse than the trivial bound of $1$) (of course, in any case we are really more interested in large $n$).  Notice that with $\varepsilon$ and $\lambda$ as above:
\begin{itemize}
\item[(i)] $\varepsilon n \geq 16 \sqrt{nt \log(n) / \lambda}$ is equivalent to $\sqrt{nt} \geq 400 \log n$;
\item[(ii)] $\lambda < 0.1$ is equivalent to $\sqrt{nt} > 640$; and
\item[(iii)] $\varepsilon n  \geq 4e \sqrt{nt}$ is equivalent to $\sqrt{nt} \geq 40 e$.
\end{itemize}
Thus, these choices of $\lambda$ and $\varepsilon$ allow us to appeal to the aforementioned results, obtaining
\begin{eqnarray*}
|\perm{A}| &\leq& \left(2 \varepsilon + 1 - \left(1 - \sqrt{\dfrac{2}{\pi}} \right) t \right) ^{n} + 4 \exp \left[ \dfrac{-\varepsilon^2 n \lambda}{32 t} \right] + n e^{-1/(5 \lambda)} + \exp \left( \dfrac{- \varepsilon n}{2e \sqrt{nt}} \right)\\
&\leq& \exp \left[-nt \left(1 - \sqrt{2/\pi} - 0.2 \right)  \right]  + 4 \exp \left[ \dfrac{-\sqrt{nt}}{50} \right] + n \exp \left[-\dfrac{\sqrt{nt}}{320} \right] + \exp \left[ \dfrac{- \sqrt{nt}}{20 e} \right]\\
&\leq& (n+6) \exp \left[\dfrac{-\sqrt{nt}}{400} \right],
\end{eqnarray*}
which completes the proof of \myRef{Theorem \ref{main theorem real case}}.

\section{Conclusion}\label{section conclusion}
Our biggest (and most natural) open question concerns the optimality of our main results.  Namely, a proof of \myRef{Conjecture \ref{main conjecture}} as stated in \myRef{Section \ref{section intro}} would be very interesting.  The main barrier preventing us from proving this conjecture is our reliance on Talagrand's inequality.  For $\K = \R$, we partially mitigated the cost of using this inequality via \myRef{Lemma \ref{bound on LX deviation}}, but the application of \myRef{Theorem \ref{Talagrand concentration}} was still a crucial (though not the only) bottleneck.  Our argument could conceivably be pushed further either by a more careful analysis that better uses \eqref{original bound on LX deviation} or by a more nuanced argument that splits the matrix $A$ into more than two pieces.

\paragraph*{}One could also try to avoid using Talagrand's inequality altogether.  It is possible that some stronger inequality could replace it (by taking advantage of some aspects particular to our situation), but a more likely ``quick fix" of this sort would be a more direct estimate of $\E[(\Vert A X \Vert_1 /n)^n]$ (in the real case, $AX$ is simply a vector-valued Rademacher sum, which is a well-studied random variable).  On the other hand, it could be that the convexity bounds on the Glynn estimator already give away too much to recover anything stronger than what we have.

\paragraph*{}An entirely different approach would be to determine among matrices with given norm and $h_{\infty}$, which ones maximize $| \perm{A}|$ (it does not seem impossible that this maximum is always attained by a circulant matrix with all real entries).  A characterization of these extremal matrices would certainly be very appealing, and one might hope that thinking along these lines would suggest a more combinatorial approach.

\paragraph*{}As far as \myRef{Question B} is concerned, we feel that there is still more to be said beyond the present results.  Namely, our results only provide a necessary condition for a matrix to have a large permanent (i.e., $h_{\infty}$ must be large).  But there is no clean converse to this statement; consider for example a diagonal matrix with most of its diagonal entries equal to 1 except for one of them equal to 0 (this has large $h_{\infty}$ and permanent $0$).  To continue the spirit of the question, we state the following variation of \myRef{Question B} (essentially echoing a question of \cite{aaronson}):
\paragraph*{Problem B$'$:}Find a (deterministic) polynomial-time algorithm that takes an $n \times n$ matrix $A$ of norm $1$ and decides whether $|\perm{A}| < n^{-100}$ or $|\perm{A}| > n^{-10}$ (with the understanding that the input matrix will satisfy one of these inequalities).

\paragraph*{}We attempted this along the following lines: ``if the matrix has large permanent, it must have many rows each of which is dominated by a single large entry.  If the matrix is of this form, then [heuristic] hopefully that means the permanent is dominated by terms that use at least most of these large entries.  Since there are so many large entries, we can efficiently compute the exact contribution of these dominant terms."  However, our current results do not allow us to conclude that there are enough rows with large entries (we would like all but about $\log n$ of the rows but are limited to all but about $\log^2 n$ when $\K = \R$ and $\sqrt{n \log n}$ when $\K = \C$).  And in fact, even if we could improve our result to the conjectured (and best possible) bound mentioned above, we still do not quite see how to make this heuristic argument yield a polynomial-time algorithm.  We should note that Gurvits \cite{gurvits} found a \textit{randomized} algorithm accomplishing the goal of \myRef{Problem B$'$}, and in the deterministic setting, progress towards \myRef{Problem B$'$} was made in \cite{aaronson} which gives an algorithm in the case that the entries of $A$ are non-negative.

\subsection*{Further remarks}
\begin{itemize}
\item We note that there is a lot of freedom in choosing the random variable $X \in \K^{n}$ for the Glynn estimator ($X$ just needs to have independent components each satisfying $\E[X_i] =0$ and $\E[|X_i|^2] = 1$).  For example, when $\K = \R$, it is tempting to replace $X \in \R^n$ with an $n$-dimensional Gaussian and bound the Glynn estimator by something like
\[
| \perm{A} | = \left| \E \left[ \prod_{i} X_i Y_i \right] \right| \leq \E \left[ \prod_{i} |X_i Y_i| \right] \leq \E \left[ \left( \dfrac{1}{n} \sum_{i} |X_i Y_i| \right)^n \right].
\]
But even if $A$ is the identity matrix this is already (exponentially) larger than $1$, which illustrates the difficulty with this approach.
\item Via an entirely different method, we were also able to get an upper bound on the permanent for matrices having only non-negative real entries by appealing to the results of \cite{gurvitsSam}.  Unfortunately, the bound we obtained is strictly weaker than the results of the present paper, so it is omitted.
\end{itemize}
\textbf{Acknowledgement:} We thank Hoi Nguyen for introducing us to this problem and sharing \cite{nguyenPrivate}.

\bibliography{MyBib}
\bibliographystyle{plain}

\vspace*{.5 in}
\noindent Department of Mathematics\\
Rutgers University\\
Piscataway, NJ 08854\\
\texttt{rkb73@math.rutgers.edu}\\
\texttt{prd41@math.rutgers.edu}

\end{document}